\newtheorem{Theorem}{Theorem}[section]
\newtheorem{Lemma}{Lemma}[section]
\newtheorem{Definition}{Definition}[section]
\newtheorem{Proposition}{Proposition}[section]
\def\ee{\end{eqnarray*}}
\def\be{\begin{eqnarray*}}
\def\bee{\end{eqnarray}}
\def\bbe{\begin{eqnarray}}
\def\ea{\end{align*}}
\def\ba{\begin{align*}}
\let\today\relax
\def\ps@pprintTitle{%
    \let\@oddhead\@empty
    \let\@evenhead\@empty
    \def\@oddfoot{\footnotesize\itshape
      \hfill\today}
    \let\@evenfoot\@oddfoot
    }
\newcommand{\bal}{\@ifstar{\@bals}{\@bal}}
\def\@bals#1\eal{\begin{align*}#1\end{align*}}
\def\@bal#1\eal{\begin{align}#1\end{align}}
\def\A{A} 
\def\L{ L}
\def\E{\hat{E}} 
\def\g{ g}
\def\v{v} 
\def\u{u}
\def\a{a} 
    \def\p{\partial}
\DeclareMathOperator*{\esssup}{ess\,sup}
\begin{document}

\title[Fractional Boussinesq equation in an exterior domain]{Stability of purely  convective steady-states of  fractional Boussinesq equations in  an exterior domain  }

\author{Zhi-Min Chen}

\address{School  of Mathematical Sciences, Shenzhen University, Shenzhen 518060,  China}%
\email{zmchen@szu.edu.cn}

\begin{abstract} A thermal convection flow in the three-dimensional unbounded fluid domain exterior to a sphere is considered. The viscosity force is determined by a fractional power of the  Stokes operator. A purely conductive steady state  arises due to the fluid heated from the sphere. A weak solution of the fluid motion problem is obtained and global stability of the steady-state solution  in $L^2$ is provided.

\

\noindent {\scshape Keywords.}
 Global stability, Asymptotic behaviours, weak solution, fractional Boussinesq
equation, exterior domain, purely conductive steady-state solution

\end{abstract}

\subjclass[2020]{35B35, 35B40, 35D30, 35Q35,  76D99.}

\maketitle

\section{Introduction}

The traditional Rayleigh-B\'enard thermal convection problem originated from B\'enard \cite{B2} and Rayleigh \cite{B3} is  considered as a type of natural fluid motion in a planar horizontal layer fluid heated from its lower horizontal boundary, and the flow is driven by a buoyant force due to the temperature difference of the horizontal fluid boundaries. In a study of nonlinear behaviours of atmospheric fluid motion, this horizontal layer  of fluid motion  model  was adopted  by Lorenz \cite{L} (see also references therein)  to approximate locally  the atmospheric flow surrounding the spherical earth surface. The present study is concerned with a  thermal convection flow   in the three-dimensional exterior  domain
\be\Omega = \{ x \in R^3; \,\, |x|>1\}
\ee
 heated from the unit sphere $\p\Omega = \{ x\in R^3;\,\, |x|=1\}$.  The incompressible fluid motion is governed by the following system  in a  dimensionless formulation
\bbe \left.\begin{array}{ll}
\p_t \u + \u\cdot \nabla \u = \Delta \u -\nabla p -\alpha \g T,\,\, \nabla \cdot \u=0,\,\, &\label{a1}
\\ \p_t T +\u\cdot \nabla T =\Delta T,\,\,&
\\
 \u|_{\p\Omega}=0,\,\, T|_{\p\Omega}=\alpha. &
\end{array}\right\}
\bee
Here  the time-dependent functions $\u$, $T$ and $p$ represent, respectively,  the  unknown velocity, temperature and pressure. The temperature $T$ varies from $\alpha>0$ on the  bottom boundary $\p \Omega$ to  zero  at the infinity ( $|x| \to \infty$). This gives rise to the buoyant force, which  is determined by  the acceleration due to the gravity expressed as
\bbe \g=g_0\nabla \frac1{|x|},
\bee
defining a large-scale fluid motion. Here, $g_0$ is a gravitational constant. For simplicity,  $g_0=1$ is assumed and $\alpha$ being a constant is adopted. 

Equation  (\ref{a1}) admits a  purely convective steady-state solution   $(\u_0,p_0,T_0)$ expressed as
\bbe \u_0=0,\,\,\,  \nabla p_0= -\alpha \g T_0,\,\, \Delta T_0=0, \,\,T_0|_{\p\Omega}=\alpha,
\bee
and hence
\bbe \u_0=0,\,\,\, T_0=\frac\alpha {|x|}, \,\, p_0(x) = -\frac{\alpha^2}{ 2 |x|^2}.
\bee
 Local stability of the purely steady-state flow was studied by Hishida and Yamada \cite{HY} and  Hishida \cite{H94,H97} with respect to strong solutions, while $L^2$ global stability of the purely steady-state flow was  studied by Chen-Kagei-Miyakawa \cite{Chen1992} with respect to weak solutions.

Let $L^2_\sigma(\Omega)^3$ be the $L^2$-closure of  $C^\infty_{0,\sigma}(\Omega)^3$, the set  of all smooth
solenoidal vector fields with compact support in  $\Omega$.  Let  $P$ denote the bounded projection operator mapping $L^2(\Omega)^3$ onto  $L^2_\sigma(\Omega)^3$ due to the Helmholtz decomposition \cite{Miya82}.
By adopting the perturbation
\bbe (\u, T)= (\u_0,T_0)+ ( \u, \theta)\label{T0}
\bee
 and applying the projection operator $P$ to the momentum equation, equation  (\ref{a1}) can be  written as
\bbe \left.\begin{array}{ll}
\p_t \u +A \u+P\alpha \g\theta=-P(\u\cdot \nabla \u),  &
\\ \p_t \theta -\Delta \theta+\u\cdot \nabla T_0=-\u\cdot \nabla \theta,&
\end{array}\right\}
\bee
for $\A= -P\Delta$ and the solutions
\be u(t)\in  D(\A)&=& \left\{ \u \in  W^{2,2}(\Omega)^3;  \,\, \u|_{\p\Omega}=0, \,\, \nabla \cdot u=0\right\},\,\,
\\
\theta(t)\in  D(-\Delta )&=& \left\{ \u \in  W^{2,2}(\Omega)^3;  \,\, \u|_{\p\Omega}=0\right\}.
\ee
 By   the spectral representation (see, for example,  Yosida \cite[page 313, Theorem 1]{Yosida}), we have
 fractional Stokes operator
\bbe \A^\gamma  =\int^\infty_0 \lambda^\gamma  d E_\lambda,\,\, 0<\gamma \le 1\label{Asp}\bee
associated with the domain
\be D(\A^\gamma )= \left\{ \u \in L^2_\sigma(\Omega)^3; \,\,\|\u\|_2+ \|A^\gamma \u \|_2 <\infty, \,\, \u|_{\p\Omega}=0\right\},
\ee
where  $E_\lambda$ denotes  the spectral resolution of the unit determined by the  operator $\A$ and  $\|\cdot \|_r$ represents the Lebesque space norm  $\|\cdot \|_{L^r(\Omega)}$.

It is the purpose of the present paper to consider the global  stability of the basic steady-state flow $(\u_0,T_0)$ in the sense $(\u, T)\to (\u_0,T_0)$ or $(u,\theta_0) \to (0,0)$ as $t\to\infty$ in the $L^2$ space when the viscosity force  $-\Delta \u$ is reduced to  that determined by  a fractional Laplacian $A^\kappa$ in the sense of (\ref{Asp}).
More precisely, the fluid motion problem involving the fractional Laplacian   in the exterior domain $\Omega$  is expressed as
\bbe \label{eq1}\label{a3}\left.\begin{array}{ll}
\p_t \u +\A^\kappa \u+P\alpha \g\theta=-P(\u\cdot \nabla \u),  & 
\\ \p_t \theta -\Delta \theta+\u\cdot \nabla T_0=-\u\cdot \nabla \theta,\,\,& 
\\
 \u|_{\p\Omega}=0,\,\, \theta|_{\p\Omega} =0, & 
 \\
 \u|_{t=0}=\a,\,\, \theta|_{t=0}=b.  & 
\end{array}\right\}
\bee
Therefore,  it follows from  (\ref{T0}) that  the $L^2$ stability of $(u_0,T_0)$  reduces to the $L^2$-decay problem with respect to any weak solutions  in the following sense:
\begin{Definition}
 Let $(\a,b) \in L^2_\sigma(\Omega)^3\times L^2(\Omega)$ and  $0<\kappa  \le 1$.   A vector field  $(\u,\theta)$ defining  on the domain   $[0,\infty)\times \Omega$  is said to be  a global weak solution of (\ref{eq1}), if for any $\tau>0$,
\be (\u,\theta) \in L^\infty\left(0,\tau; L^2_\sigma(\Omega)^3\times L^2(\Omega)\right)\cap L^2\left(0,\tau; D(A^{\frac\kappa 2})\times W_0^{1,2}(\Omega)\right),\ee

\bbe\lefteqn{\int^\tau_0 (-\langle  \u(t),\p_tv(t)\rangle + \langle A^{\frac\kappa 2}\u(t),A^{\frac\kappa 2}v(t)\rangle + \langle \alpha \g\theta,v(t)\rangle )dt\label{w1}}\hspace{30mm}
\\&=& \int^\tau_0 \langle \u(t)\otimes \u(t),\nabla v(t)\rangle dt+\langle \u(0),v(0)\rangle \nonumber
\bee
and
\bbe\lefteqn{ \int^\tau_0(- \langle  \theta(t),\p_t\vartheta (t)\rangle + \langle \nabla\theta(t),\nabla\vartheta (t)\rangle +\langle \u\cdot \nabla T_0,\vartheta (t)\rangle )dt \label{w2}}\hspace{40mm}
\\&=& \int^\tau_0 \langle \u(t)\theta(t),\nabla \vartheta (t)\rangle dt+\langle \theta(0),\vartheta (0)\rangle, \nonumber
\bee
for the test function  $(v,\vartheta ) \in C^1([0,\tau]; C_{0,\sigma}^\infty(\Omega)^3\times  C_0^\infty(\Omega))$ with $v(\tau)=0$ and $\vartheta (\tau))=0$. Here  $\langle\cdot,\cdot\rangle $ represents the $L^2$ inner product, $C_0^\infty(\Omega)$ denotes the set of all smooth
functions  with compact support in  $\Omega$ and $W^{1,2}_0(\Omega)$ denotes the closure of $C_0^\infty(\Omega)$ in the Sobolev space $W^{1,2}(\Omega)$.
\end{Definition}

We are in the position to state the main result.
\begin{Theorem}\label{Th1} Let $\frac34 <\kappa  \le 1$, $0<\beta<\frac12$ and   $0<\alpha<2^{-\kappa}$.  Then for  $(a,b) \in L^2_\sigma(\Omega)^3\times L^2(\Omega)$, equation  (\ref{a3}) admits  a global weak solution so that
\bbe\|(\u(t),\theta(t))\|_2 \to 0  \,\,\mbox{ as }\,\, t\to \infty.\label{ass1}\bee

Moreover, if  $(v, \vartheta)$ solves  the linearized problem of (\ref{eq1}):
\bbe \left.\begin{array}{ll}
\p_t v+\A^\kappa v+P\alpha \g\vartheta=0, & \label{linear}
\\ \p_t \vartheta -\Delta \vartheta+v\cdot \nabla T_0=0,\,\,&
\\
 v|_{\p\Omega}=0,\,\, \vartheta|_{\p\Omega} =0, &
 \\
 v|_{t=0}=\a,\,\, \vartheta|_{t=0}=b, &
\end{array}\right\}\label{Leq}
\bee
and
\bbe  \|(v(t),\vartheta(t)) \|_2\le Ct^{-\beta},\,\,\, t>0,\label{con1}\bee
then the algebraic decay
\bbe\|(\u(t),\theta(t))\|_2 \le Ct^{-\beta},\,\,\, t>0 \label{ass2}\bee
holds true.
\end{Theorem}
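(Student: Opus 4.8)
The plan is to establish Theorem~\ref{Th1} in three stages: construction of a global weak solution carrying the energy inequality, the unconditional decay $\|(\u,\theta)\|_2\to 0$, and finally the algebraic rate under hypothesis (\ref{con1}).

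For existence I would build Galerkin (or retarded-mollification) approximations and derive the basic energy identity by pairing the two equations of (\ref{a3}) with $(\u,\theta)$. The convective terms disappear because $\langle P(\u\cdot\nabla\u),\u\rangle=0$ and $\langle\u\cdot\nabla\theta,\theta\rangle=0$, and since $\nabla T_0=\alpha\g$ the two coupling contributions combine into the single term $2\alpha\langle\g\theta,\u\rangle$. The decisive observation is that $|\g|=|x|^{-2}$, so on $\Omega$ (where $|x|>1$) one has $|x|^{-2}\le|x|^{-1}|x|^{-\kappa}$ for $\kappa\le1$, whence
\[
2\alpha|\langle\g\theta,\u\rangle|\le 2\alpha\left(\int_\Omega\frac{|\theta|^2}{|x|^2}\,dx\right)^{1/2}\left(\int_\Omega\frac{|\u|^2}{|x|^{2\kappa}}\,dx\right)^{1/2}
\]
is controlled by the classical Hardy inequality for $\theta$ and the fractional Hardy inequality for $\u$. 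The restriction $\alpha<2^{-\kappa}$ is precisely what renders the resulting constant strictly below the coefficient of the dissipation, giving
\[
\frac{d}{dt}\|(\u,\theta)\|_2^2+c_0\big(\|A^{\kappa/2}\u\|_2^2+\|\nabla\theta\|_2^2\big)\le 0,\qquad c_0>0 .
\]
This produces uniform bounds in $L^\infty(0,\tau;L^2)\cap L^2(0,\tau;D(A^{\kappa/2})\times W_0^{1,2}(\Omega))$; bounding $\p_t(\u,\theta)$ in a negative-order space and invoking the Aubin--Lions lemma on an exhaustion of $\Omega$ by bounded sets, with a diagonal argument, yields strong $L^2_{\mathrm{loc}}$ convergence, enough to pass to the limit in (\ref{w1})--(\ref{w2}).

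For the decay $\|(\u,\theta)\|_2\to0$ I would first record that the generator $L$ of the linearized system (\ref{Leq}) is accretive with trivial null space, since $\langle L(v,\vartheta),(v,\vartheta)\rangle=\|A^{\kappa/2}v\|_2^2+\|\nabla\vartheta\|_2^2+2\alpha\langle\g\vartheta,v\rangle\ge c_0(\|A^{\kappa/2}v\|_2^2+\|\nabla\vartheta\|_2^2)$; by the spectral theorem the semigroup $S(t)=e^{-tL}$ then carries no mass at the bottom of the spectrum and $\|S(t)(\a,b)\|_2\to0$ for every datum. I would then run a Wiegner-type comparison: for $W=(\u,\theta)-S(t)(\a,b)$ the nonlinear energy inequality minus the linear energy identity gives $\tfrac12\frac{d}{dt}\|W\|_2^2+c_0\mathcal D(W)\le-\langle\mathcal N(\u,\theta),S(t)(\a,b)\rangle$, using $\langle\mathcal N(\u,\theta),(\u,\theta)\rangle=0$, where $\mathcal N$ denotes the convective nonlinearity. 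After integration by parts the right-hand side is a pairing of the fluxes $\u\otimes\u$ and $\u\theta$ against gradients of the decaying linear evolution; since $\int_0^\infty(\|A^{\kappa/2}\u\|_2^2+\|\nabla\theta\|_2^2)\,ds<\infty$, this forces $\|W\|_2\to0$, and hence $\|(\u,\theta)\|_2\to0$.

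The rate (\ref{ass2}) is the hardest part. Given (\ref{con1}) the linear term already decays like $t^{-\beta}$, and the nonlinear interaction above is estimated through the Gagliardo--Nirenberg inequality $\|\u\|_4\le C\|\u\|_2^{1-3/(4\kappa)}\|A^{\kappa/2}\u\|_2^{3/(4\kappa)}$, whose exponent $3/(4\kappa)$ is admissible exactly when $\kappa>\tfrac34$; this is where that hypothesis enters, and it also places $\u$ in $L^r(0,\infty;L^4)$ with $r=8\kappa/3>2$, the extra room needed to close the iteration. Feeding the current decay of $\|(\u,\theta)\|_2$ together with the square-integrability of the dissipation into the nonlinear convolution, the constraint $\beta<\tfrac12$ guarantees convergence of the resulting time integral and reproduces the rate $t^{-\beta}$ rather than a slower one, after which a bootstrap upgrades the decay to $O(t^{-\beta})$. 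I expect the main obstacle to be precisely this estimate: the nonlinear term must be dominated \emph{without} any smallness of the data, using only the global energy bound and the integrated dissipation, which in turn rests on sharp $L^p$--$L^q$ (or gradient) decay estimates for the fractional Stokes semigroup $e^{-t\A^\kappa}$ in the exterior domain $\Omega$ --- the same delicate ingredient that underlies the hypothesis (\ref{con1}).
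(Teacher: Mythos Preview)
Your existence outline and the energy estimate (with constant $1-\alpha 2^\kappa$) match the paper's Lemma~\ref{L04} and Proposition~\ref{TT1}; the paper uses resolvent mollifiers $J_n=n(n+A)^{-1}$, $I_n=n(n+L)^{-1}$ and a contraction-mapping construction of the approximants rather than Galerkin, but this is cosmetic.

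For the decay you diverge from the paper. You propose the Wiegner subtraction $W=(\u,\theta)-e^{-tL}(\a,b)$ and an energy argument for $W$; the paper instead keeps the \emph{approximate} solutions $U_n$ throughout (for which the Duhamel formula is rigorous), writes
\[
U_n(t)=e^{-tL}I_nU_0-\int_0^t e^{-(t-s)L}\bigl(P(J_n\u_n\!\cdot\!\nabla\u_n),\,J_n\u_n\!\cdot\!\nabla\theta_n\bigr)\,ds,
\]
and bounds the nonlinear integral directly in $L^2$ via the gradient decay $\|\nabla e^{-tL}V\|_2\le C(t^{-1/2}+t^{-1/(2\kappa)})\|V\|_2$ of the \emph{coupled} semigroup (Lemma~\ref{L07}, derived from Lemmas~\ref{L04}--\ref{L06}), followed by the time-averaging trick $\|U_n(t)\|_2\le\frac1t\int_0^t\|U_n(s)\|_2\,ds$ coming from the energy inequality. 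Two cautions about your route: weak solutions satisfy only the energy \emph{inequality}, so the differential identity you write for $\|W\|_2^2$ is not directly available --- you must either stay at the approximate level (as the paper does) or justify $e^{-tL}(\a,b)$ as an admissible test function and invoke a strong energy inequality; and your sentence ``since $\int_0^\infty(\|A^{\kappa/2}\u\|_2^2+\|\nabla\theta\|_2^2)\,ds<\infty$, this forces $\|W\|_2\to0$'' hides the real work, which still requires the very semigroup gradient estimate you defer to the end --- the finite dissipation integral alone does not close the argument. Note also that the relevant estimate is for $e^{-tL}$, not for the bare fractional Stokes semigroup $e^{-tA^\kappa}$.

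For the algebraic rate both arguments are a bootstrap; the paper makes it explicit as $\beta_{n+1}=\tfrac1{4\kappa}+\beta_n(1-\tfrac1{2\kappa})\to\tfrac12$, starting from $\beta_1=\tfrac1{4\kappa}$ obtained in the first pass. The hypothesis $\kappa>\tfrac34$ enters so that the H\"older exponents $p,q$ in (\ref{dd}) exceed $1$, and $\beta<\tfrac12$ so that the iteration can reach $\beta$.
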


To derive the stability, we assume that $\alpha >0$ is sufficiently small, so that the buoyant force and external influence from nonhomogeneous boundary condition can be controlled by the viscous force and heat diffusion.
If the temperature on the conducting surface is not small, it is infeasible to keep the stability. For example, instabilities  arise in  the Rayleigh-B\'enard thermal convection flow (see Lorenz \cite{L}, Chen and Price \cite{Chen2006} and Rabinowitz \cite{R}).
 If  the thermal convection effect is neglected, i.e., $\alpha =0$ on the sphere $\p\Omega$, equation (\ref{eq1}) with $\kappa=1$ reduces to the exterior Navier-Stokes equations. The corresponding decay estimates displayed in Theorem \ref{Th1} for the exterior Navier-Stokes equations have been given by Borchers and Miyakawa \cite{Miya1992}.

 The motivation on the stability analysis for  exterior domain flows is    from the understanding of fluid motion passing an object or an object moving in a fluid at a stationary speed (see Oseen \cite{O}). This forced problem has an basic steady-state flow (see Finn \cite{F}). The stability of small stationary Navier-Stokes flows has been well studied (see  Borchers and Miyakawa \cite{Miya1992, Miya1995}, Chen \cite{Chen1993}, Hishida \cite{H}, Kozono and Ogawa \cite{K}, Kozono \cite{K1} and Masuda \cite{1975Masuda} ). Decay problems of exterior Navier-Stokes flows were well investigated by  Borchers and Miyakawa \cite{Miya1990}, Han \cite{Han}, He and Miyakawa \cite{He},  Miyakawa \cite{Miya82}, Miyakawa and Sohr \cite{Miya1988}. We also refer to Chen \cite{Chen2012, Chen2019} for potential flows around objects moving in water waves. The algebraic decay estimate (\ref{ass2}) is mainly determined by the energy estimate of the flow. For sharp decay estimates of a fluid flow  in $L^2$ and Hardy spaces, one may refer to Chen and Miyakawa \cite{ChenM}.  However, there is little literature on fractional exterior Boussinesq flows. The present study provides  the first attempt in the  understanding of  the fractional exterior flows.

  The proof of Theorem \ref{Th1} is divided into four sections. Section 2 is a collection of preliminaries; the existence of the weak solution  is given in Section 3;  decay property (\ref{ass1}) is shown in Section 4; algebraic decay property (\ref{ass2}) is derived in Section 5.

\section{Preliminaries}
Let $C$ and $C_n$ be generic constants, which are  independent of the quantities $\u$, $\theta$, $v$, $\vartheta$, $U$, $V$, $t>0$, $x\in \Omega$, but they may depend on initial vector field $(\a,b)$.  However, $C_n$  depends on the integer $n\ge 1$, while $C$ is independent of $n\ge 1$.

Following lemmas are crucial in our analysis.
\begin{Lemma} \label{L01}(Ladyzhenskaya \cite[page 41]{Lady}) There holds the following inequality
\bbe \int_\Omega \frac{|u(x)|^2}{|x|^2} dx \le 4\int _\Omega |\nabla u(x)|^2 dx,\,\,\, u\in C^\infty_0(\Omega).\bee
\end{Lemma}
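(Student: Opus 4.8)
The plan is to prove this sharp three-dimensional Hardy inequality by the classical integration-by-parts argument, built around the divergence structure of the radial field $x/|x|^2$. The starting observation is the pointwise identity
\[
\nabla\cdot\frac{x}{|x|^2}=\frac{3}{|x|^2}-\frac{2\,x\cdot x}{|x|^4}=\frac{1}{|x|^2},\qquad x\neq 0,
\]
which is exactly where the spatial dimension enters: in $\mathbb R^n$ the divergence equals $(n-2)/|x|^2$, so the factor $1$ (hence ultimately the constant $4=(2/(n-2))^2$ at $n=3$) is a dimension-three phenomenon. A convenient feature of the exterior domain $\Omega=\{|x|>1\}$ is that the origin is excluded, so $x/|x|^2$ is smooth on a neighbourhood of $\overline\Omega$ and no singularity at $x=0$ needs to be regularized.

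Using the identity I would rewrite the left-hand side and integrate by parts,
\[
\int_\Omega\frac{|u|^2}{|x|^2}\,dx=\int_\Omega|u|^2\,\nabla\cdot\frac{x}{|x|^2}\,dx
=-\int_\Omega\nabla\bigl(|u|^2\bigr)\cdot\frac{x}{|x|^2}\,dx
=-2\int_\Omega u\,\frac{x\cdot\nabla u}{|x|^2}\,dx,
\]
where the boundary contributions, both on $\partial\Omega$ and at infinity, drop out because $u\in C_0^\infty(\Omega)$ has compact support strictly inside $\Omega$. For a vector-valued $u$ the same computation is carried out componentwise, with $\nabla(|u|^2)=2\sum_i u_i\nabla u_i$.

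Finally I would estimate the right-hand side by Cauchy--Schwarz, applied twice. Pointwise $|x\cdot\nabla u|\le|x|\,|\nabla u|$, so the integrand is controlled by $2(|u|/|x|)\,|\nabla u|$; integrating and applying Cauchy--Schwarz over $\Omega$ gives
\[
\int_\Omega\frac{|u|^2}{|x|^2}\,dx\le 2\left(\int_\Omega\frac{|u|^2}{|x|^2}\,dx\right)^{1/2}\left(\int_\Omega|\nabla u|^2\,dx\right)^{1/2}.
\]
Writing $I$ and $J$ for the two integrals in the statement, this reads $I\le 2\sqrt I\,\sqrt J$; dividing by $\sqrt I$ (the case $I=0$ being trivial) and squaring yields $I\le 4J$, which is the assertion. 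The argument is essentially obstruction-free; the only steps that genuinely require justification are the vanishing of the boundary terms, guaranteed by the compact support of $u$ inside $\Omega$, and the division by $\sqrt I$, which is legitimate once the trivial case $I=0$ is set aside.
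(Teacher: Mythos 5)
Your proof is correct. The paper does not prove this lemma at all --- it simply cites Ladyzhenskaya --- and your argument (the three-dimensional identity $\nabla\cdot\bigl(x/|x|^2\bigr)=1/|x|^2$, integration by parts with boundary terms vanishing by compact support, then Cauchy--Schwarz and absorption of $\sqrt{I}$, with the trivial case $I=0$ and the finiteness of $I$ duly noted) is precisely the classical proof of this Hardy-type inequality, essentially the one in the cited reference.
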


Let  $H^\frac\gamma2$ be  the completion of $D(A^\frac\gamma2)$ with respect to  the norm $\|A^\frac\gamma2 u\|_2$  for $0\le \gamma \le 1$.
\begin{Lemma}\label{L02}(Miyakawa \cite[Theorem 2.4]{Miya82})
Let  $0\le \gamma \le 1$ and $\frac1p = \frac12 - \frac\gamma 3$. Then $H^\frac\gamma2$ is equal to the complex interpolation space $[L_\sigma^2(\Omega)^3, H^\frac12]_\gamma$ in the sense of \cite{Tr} and
\bbe \|\u \|_p \le 2^\gamma \| \A^{\frac\gamma2} \u\|_2, \,\,  u\in H^\frac\gamma2.\label{miya}
\bee
\end{Lemma}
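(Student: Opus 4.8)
The plan is to prove both assertions of Lemma~\ref{L02} by reducing $\A$ to a multiplication operator through the spectral theorem, establishing the interpolation identity at the level of weighted $L^2$ spaces, and then obtaining the embedding bound by complex interpolation between the two endpoints $\gamma=0$ and $\gamma=1$. Throughout I use that $\A$ is a nonnegative self-adjoint operator on $X=L^2_\sigma(\Omega)^3$ which, by the energy identity $\langle \A u,u\rangle=\|\nabla u\|_2^2$, is injective (its kernel would consist of solenoidal fields with $\nabla u=0$ vanishing on $\p\Omega$, hence $u=0$); this injectivity is what makes the homogeneous spaces $H^{\gamma/2}$ well defined.

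For the interpolation identity, by the spectral theorem there is a unitary map $U\colon X\to L^2(M,d\mu)$ carrying $\A$ into multiplication by a measurable function $\lambda=\lambda(m)\ge0$, with $\mu(\{\lambda=0\})=0$ by injectivity. Under $U$ the operator $\A^{\gamma/2}$ becomes multiplication by $\lambda^{\gamma/2}$, so the defining norm $\|\A^{\gamma/2}u\|_2$ of $H^{\gamma/2}$ corresponds isometrically to the weighted norm on $L^2(\lambda^\gamma d\mu)$; in particular $L^2_\sigma(\Omega)^3\cong L^2(d\mu)$ and $H^{1/2}\cong L^2(\lambda\,d\mu)$. The Stein--Weiss interpolation theorem for $L^2$ spaces with a change of measure gives, isometrically, $[L^2(d\mu),L^2(\lambda\,d\mu)]_\gamma=L^2(\lambda^\gamma d\mu)$, and transporting back through $U$ yields $[L^2_\sigma(\Omega)^3,H^{1/2}]_\gamma=H^{\gamma/2}$ with equality of norms, $\|u\|_{[L^2_\sigma,H^{1/2}]_\gamma}=\|\A^{\gamma/2}u\|_2$.

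For the embedding bound I first record the two endpoints for the inclusion map. At $\gamma=0$ the inclusion $L^2_\sigma(\Omega)^3\hookrightarrow L^2(\Omega)^3$ has norm $1=2^0$. At $\gamma=1$, for $u\in C^\infty_{0,\sigma}(\Omega)^3$ the integration by parts $\|\A^{1/2}u\|_2^2=\langle \A u,u\rangle=\langle-\Delta u,u\rangle=\|\nabla u\|_2^2$ (the boundary term vanishes since $u|_{\p\Omega}=0$) identifies the $H^{1/2}$ norm with $\|\nabla u\|_2$; extending $u$ by zero to $R^3$ and applying the Sobolev inequality $\|u\|_6\le C_S\|\nabla u\|_2$, whose sharp constant in $R^3$ lies well below $2$, gives $\|u\|_6\le 2\|\A^{1/2}u\|_2$, and this passes to all of $H^{1/2}$ by density. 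Thus the inclusion maps $L^2_\sigma\to L^2$ and $H^{1/2}\to L^6$ with norms $M_0=1$ and $M_1=2$. Since complex interpolation of $L^q$ spaces is isometric, $[L^2(\Omega)^3,L^6(\Omega)^3]_\gamma=L^p(\Omega)^3$ with $\frac1p=\frac{1-\gamma}2+\frac\gamma6=\frac12-\frac\gamma3$, and Calder\'on's theorem bounds the norm of the inclusion $[L^2_\sigma,H^{1/2}]_\gamma\to L^p$ by $M_0^{1-\gamma}M_1^\gamma=2^\gamma$. Combining this with the isometric identity of the previous step produces exactly $\|u\|_p\le2^\gamma\|\A^{\gamma/2}u\|_2$.

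The main obstacle is the rigorous handling of the homogeneous spaces in the interpolation identity: one must verify that $(L^2_\sigma,H^{1/2})$ is a genuine interpolation couple, both continuously included in a common Hausdorff space such as $L^2(\tfrac{\lambda}{1+\lambda}d\mu)$, and that the completion defining $H^{\gamma/2}$ coincides with the abstract interpolation space rather than merely embedding into it, which is precisely where injectivity of $\A$ and the absence of spectral mass at $\lambda=0$ enter. A secondary point requiring care is the constant: the factor $2^\gamma$ is forced by the value $M_1=2$ at the endpoint $\gamma=1$, so the Sobolev step must be arranged to yield a constant no larger than $2$ (comfortably available, and consistent with the constant $2$ appearing in the Hardy inequality of Lemma~\ref{L01}), and the interpolation must be the complex method so that no spurious multiplicative constant is introduced.
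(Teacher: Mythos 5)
The paper never proves this lemma --- it is imported verbatim from Miyakawa \cite[Theorem 2.4]{Miya82} --- so there is no internal proof to match; measured against the standard argument behind that citation, your reconstruction is correct and is essentially the expected route. Moreover, it is structurally identical to the proof the paper \emph{does} write out for the neighboring Lemma~\ref{L03}: endpoint estimates run through the exactness of type $\gamma$ of the complex interpolation functor (cited there from Triebel), with your Sobolev endpoint $\|u\|_6\le 2\|A^{1/2}u\|_2$ playing the role that Hardy's inequality (Lemma~\ref{L01}, constant $2$) plays there, and producing the same constant $2^\gamma$. Your individual steps check out: $\|A^{1/2}u\|_2=\|\nabla u\|_2$ for $u\in C^\infty_{0,\sigma}(\Omega)^3$ since $\langle Au,u\rangle=\langle -\Delta u,u\rangle$; extension by zero to $\mathbb{R}^3$ is legitimate for such fields, and the sharp Sobolev constant in $\mathbb{R}^3$ (about $0.43$) is indeed far below $2$, so the endpoint norms $M_0=1$, $M_1=2$ give exactly $2^\gamma$ with no spurious factor; the Stein--Weiss change-of-measure identity $[L^2(d\mu),L^2(\lambda\,d\mu)]_\gamma=L^2(\lambda^\gamma d\mu)$ is isometric, which is what lets the interpolation identity $[L^2_\sigma(\Omega)^3,H^{1/2}]_\gamma=H^{\gamma/2}$ carry the embedding bound without degrading the constant.

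The one point that keeps this from being a finished proof is the one you name in your closing paragraph but leave as an obstacle rather than resolve: in an exterior domain the Stokes spectrum reaches down to $0$, so the homogeneous completion $H^{1/2}$ is \emph{not} a subspace of $L^2_\sigma(\Omega)^3$, and two things must actually be verified, not just flagged. First, $(L^2_\sigma(\Omega)^3,H^{1/2})$ must be realized as a compatible couple in a common Hausdorff space; your suggestion $L^2(\frac{\lambda}{1+\lambda}d\mu)$ works on the spectral side. Second --- and this is the step your sketch glosses over --- the two concrete realizations of the completion, as $L^2(\lambda\,d\mu)$ via the unitary $U$ and as a subspace of $L^6(\Omega)$ via the Sobolev endpoint, must be shown to agree, i.e.\ the natural map from the abstract completion into $L^6$ must be injective and consistent with the spectral identification; otherwise the isometric interpolation identity and the $L^6$ endpoint estimate refer a priori to different objects. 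You correctly identify injectivity of $A$ (absence of a spectral atom at $\lambda=0$) as the ingredient that makes this work, and closing it is routine, but it should be written out for the argument to be complete.
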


\begin{Lemma} \label{L03} If  $0<\gamma \le 1$, then we have
\bbe \label{de}\left(\int_\Omega  \left|\frac {\u(x)}{x}\right|^2dx\right)^{\frac12}\le 2 ^\gamma \|\A^{\frac\gamma2}\u\|_2,\,\,\, \u \in H^{\frac\gamma2}.
\bee

\end{Lemma}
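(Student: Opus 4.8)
The plan is to read the claimed bound as a mapping property of the multiplication operator $M u = u/|x|$, viewed as a linear map into the fixed target space $L^2(\Omega)^3$, and to obtain the constant $2^\gamma$ by interpolating between the two endpoints $\gamma=0$ and $\gamma=1$. Since Lemma \ref{L02} already identifies $H^{\frac\gamma2}$ as a complex interpolation space, the natural route is to establish the two endpoint estimates by hand and then invoke the Calderón interpolation theorem for $M$.

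First I would treat the endpoint $\gamma=0$. Because $|x|>1$ throughout $\Omega$, one has $|x|^{-2}<1$, so for every $u\in L^2_\sigma(\Omega)^3$,
\[
\int_\Omega\frac{|u(x)|^2}{|x|^2}\,dx\le\int_\Omega|u(x)|^2\,dx=\|u\|_2^2,
\]
i.e.\ $M:L^2_\sigma(\Omega)^3\to L^2(\Omega)^3$ with operator norm at most $1=2^0$. Next I would treat the endpoint $\gamma=1$. The key is the identity $\|A^{\frac12}u\|_2=\|\nabla u\|_2$ for solenoidal fields: since $u$ is divergence free one has $Pu=u$, and as $P$ is self-adjoint on $L^2(\Omega)^3$, integration by parts gives $\|A^{\frac12}u\|_2^2=\langle Au,u\rangle=\langle -P\Delta u,u\rangle=\langle -\Delta u,u\rangle=\|\nabla u\|_2^2$, the boundary terms vanishing on $D(A)$. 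Applying Ladyzhenskaya's inequality (Lemma \ref{L01}) componentwise to $u$ then yields
\[
\int_\Omega\frac{|u(x)|^2}{|x|^2}\,dx\le 4\|\nabla u\|_2^2=4\|A^{\frac12}u\|_2^2,
\]
so that $M:H^{\frac12}\to L^2(\Omega)^3$ with norm at most $2=2^1$. Since Lemma \ref{L01} is stated only for $u\in C_0^\infty(\Omega)$, I would extend it to all of $H^{\frac12}$ using the density of $C_{0,\sigma}^\infty(\Omega)^3$ in $D(A^{\frac12})$ and the continuity of both sides in the norm $\|A^{\frac12}\cdot\|_2$.

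Finally I would conclude by complex interpolation. By Lemma \ref{L02}, $H^{\frac\gamma2}=[L^2_\sigma(\Omega)^3,H^{\frac12}]_\gamma$, while the target is fixed so that $[L^2(\Omega)^3,L^2(\Omega)^3]_\gamma=L^2(\Omega)^3$. Interpolating the single operator $M$, bounded by $1$ on $L^2_\sigma(\Omega)^3$ and by $2$ on $H^{\frac12}$, gives
\[
\left(\int_\Omega\left|\frac{u(x)}{x}\right|^2dx\right)^{\frac12}=\|Mu\|_2\le 1^{1-\gamma}\,2^{\gamma}\,\|A^{\frac\gamma2}u\|_2=2^{\gamma}\|A^{\frac\gamma2}u\|_2,\qquad u\in H^{\frac\gamma2},
\]
which is precisely the asserted estimate. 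I expect the only delicate point to be the endpoint identity $\|A^{\frac12}u\|_2=\|\nabla u\|_2$ together with the attendant density argument carrying Ladyzhenskaya's inequality from smooth compactly supported solenoidal fields to all of $H^{\frac12}$; once these are established, the interpolation step is routine and reproduces the exact constant $2^\gamma$.
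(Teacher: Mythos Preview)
Your proof is correct and follows essentially the same route as the paper: define the multiplication operator by $|x|^{-1}$, establish the endpoint bounds $1$ on $L^2_\sigma$ (via $|x|>1$) and $2$ on $H^{\frac12}$ (via Lemma~\ref{L01}), and then interpolate using the identification $H^{\frac\gamma2}=[L^2_\sigma,H^{\frac12}]_\gamma$ from Lemma~\ref{L02}. Your added remarks on the identity $\|A^{\frac12}u\|_2=\|\nabla u\|_2$ and the density extension of Lemma~\ref{L01} make explicit what the paper leaves implicit, but the argument is otherwise the same.
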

\begin{proof}
Define the linear operator $\mathcal{T}$ so that
\bbe  \mathcal{T}u= \frac{u(x)}{|x|}.\bee
Since $|x|>1$ in $\Omega$, we see that
\bbe\label{s1} \mathcal{T}: L_\sigma^2(\Omega)^3 \mapsto L^2(\Omega)^3 \,\, \mbox{ and }\,\,  \|\mathcal{T}u\|_2 \le \|u\|_2 .\bee
It follows from Lemma \ref{L01} that
\bbe \label{s2}\mathcal{T}: H^\frac12\mapsto L^2(\Omega)^3 \,\, \mbox{ and }\,\, \|\mathcal{T}u\|_2 \le 2 \|A^\frac12 u\|_2.\bee
By  Lemma \ref{L02}, we have  the complex interpolation
\bbe \label{s3} [L_\sigma^2(\Omega)^3, \, H^\frac12]_\gamma = H^\frac\gamma2.\bee
Hence,  combining  of (\ref{s1})-(\ref{s3}) and the complex interpolation property that the corresponding interpolation functor is exact and of type $\gamma$ in the following sense  (see Triebel \cite[Theorem 1.9.3]{Tr})
\bbe \| \mathcal{T} u \|_2 \le \|\mathcal{T}\|_{L^2_\sigma \mapsto L^2}^{1-\gamma} \|\mathcal{T}\|_{H^\frac12\mapsto L^2}^\gamma \|A^\frac\gamma2 u\|_2\le 2^\gamma \|A^\frac\gamma2 u\|_2,
\bee
we thus have the desired estimate.
\end{proof}

Define the linear  operator
\bbe \nonumber\L U &=&\left( \A^\kappa \u+\alpha P \g\theta,  -\Delta\theta+\u\cdot \nabla T_0\right)
\\
&=&\left(  \A^\kappa  \u+\alpha P (\theta\nabla\frac1{|x|}), -\Delta\theta+\alpha\u\cdot \nabla \frac1{|x|} \right)
\bee
 for $0<\kappa\le 1$ and $U=(\u,\theta)$ in  the domain
 $D(\L) =D(A^\kappa)\times D(-\Delta)$.

To show the self-adjointness of the operator $L$, we employ the Kato--Rellich Theorem.
\begin{Lemma} \label{LL2}( \cite[Theorem 4.3, page 287]{Katobook}) Let $X$ be a Hilbert space, $A_1: X\mapsto X$ a self-adjoint operator and $A_2: X\mapsto X$ a symmetric  operator with the domain relationship $D(A_1) \subset D(A_2)$.
Assume that
\bbe \label{KR}\|A_2u\|_X \le \delta \| A_1u\|_X +C \| u\|_X,\,\, u\in D(A_1)
\bee
for a nonnegative   constant $\delta <1$.
Then $A_1+A_2$ is a self-adjoint operator.
\end{Lemma}

\begin{Lemma} \label{L04} The operator $\L$ is self-adjoint
and   satisfies the positive property
\bbe \langle L  (u,\theta),(u,\theta)\rangle \, \ge (1-\alpha 2^\kappa)(\|A^{\frac\kappa 2}  \u\|_2^2+\|\nabla  \theta\|_2^2),
\bee
provided that $(u,\theta)\in D(\L)$ and  $0<\alpha <2^{-\kappa}$.
\end{Lemma}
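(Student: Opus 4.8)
The plan is to regard $\L$ as a bounded symmetric perturbation of a self-adjoint diagonal operator, and then to read off the coercivity constant from a direct energy identity combined with the weighted estimates of Lemmas \ref{L01} and \ref{L03}. Throughout, write $\g=\nabla\frac1{|x|}$, so that $|\g(x)|=|x|^{-2}$ and, since $|x|>1$ on $\Omega$, $|\g(x)|\le 1$.

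For self-adjointness I would split $\L=\L_0+\mathcal B$, where $\L_0(u,\theta)=(\A^\kappa u,-\Delta\theta)$ and $\mathcal B(u,\theta)=\alpha\bigl(P(\theta\g),\,u\cdot\g\bigr)$. On $D(\L)$ the operator $\A^\kappa$ is self-adjoint, being a fractional power of the positive self-adjoint Stokes operator $\A$ through the spectral resolution (\ref{Asp}), and $-\Delta$ with the Dirichlet condition $\theta|_{\p\Omega}=0$ is self-adjoint; hence $\L_0$ is self-adjoint. Because $|\g|\le 1$, the map $\theta\mapsto P(\theta\g)$ is bounded from $L^2(\Omega)$ into $L^2_\sigma(\Omega)^3$ and $u\mapsto u\cdot\g$ is bounded from $L^2_\sigma(\Omega)^3$ into $L^2(\Omega)$, so $\mathcal B$ is bounded. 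Using that $P$ is the orthogonal projection and $u\in L^2_\sigma$, one has $\langle P(\theta\g),u\rangle=\langle\theta\g,u\rangle=\langle\theta,u\cdot\g\rangle$, which shows $\mathcal B$ is symmetric. A bounded symmetric perturbation of a self-adjoint operator is again self-adjoint on the same domain, giving the first claim.

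For the lower bound I would expand the quadratic form. From $\langle\A^\kappa u,u\rangle=\|\A^{\frac\kappa2}u\|_2^2$ (spectral calculus), $\langle-\Delta\theta,\theta\rangle=\|\nabla\theta\|_2^2$ (integration by parts, $\theta|_{\p\Omega}=0$), and the symmetry identity just noted, I obtain
\[
\langle\L(u,\theta),(u,\theta)\rangle=\|\A^{\frac\kappa2}u\|_2^2+\|\nabla\theta\|_2^2+2\alpha\int_\Omega\theta\,(u\cdot\g)\,dx.
\]
It then remains to dominate the cross term. Since $|u\cdot\g|\le|u|\,|x|^{-2}$, Cauchy--Schwarz gives
\[
\Bigl|2\alpha\int_\Omega\theta\,(u\cdot\g)\,dx\Bigr|\le 2\alpha\Bigl(\int_\Omega\frac{|\theta|^2}{|x|^2}\,dx\Bigr)^{1/2}\Bigl(\int_\Omega\frac{|u|^2}{|x|^2}\,dx\Bigr)^{1/2},
\]
and I would bound the temperature factor by Lemma \ref{L01} (extended to $\theta\in W^{1,2}_0(\Omega)$ by density) and the velocity factor by Lemma \ref{L03} with $\gamma=\kappa$, followed by Young's inequality, to absorb the cross term into $\|\A^{\frac\kappa2}u\|_2^2+\|\nabla\theta\|_2^2$.

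The main obstacle is the sharp bookkeeping of constants needed to land exactly on the threshold $\alpha 2^\kappa$: the constant $2$ from Ladyzhenskaya's inequality, the constant $2^\kappa$ from Lemma \ref{L03}, the doubling of the coupling term, and the Young splitting all compound multiplicatively, so the estimate must be organized tightly---balancing the weighted Young inequality and exploiting $|x|>1$---rather than bounded crudely, lest one lose an extra power of $2$ and shrink the admissible range of $\alpha$ below $2^{-\kappa}$.
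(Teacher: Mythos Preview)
Your self-adjointness argument via a bounded symmetric perturbation of the diagonal operator is correct and amounts to the same computation the paper carries out directly by verifying $\langle L(u,\theta),(v,\vartheta)\rangle=\langle(u,\theta),L(v,\vartheta)\rangle$.

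The positivity argument, however, has a genuine gap that your final paragraph anticipates but does not close. Carrying out your Cauchy--Schwarz splitting literally and then invoking Lemma~\ref{L01} on $\theta$ and Lemma~\ref{L03} on $u$ gives
\[
\Bigl|2\alpha\int_\Omega\theta\,(u\cdot g)\,dx\Bigr|
\le 2\alpha\Bigl(\int_\Omega\frac{|\theta|^2}{|x|^2}\,dx\Bigr)^{1/2}\Bigl(\int_\Omega\frac{|u|^2}{|x|^2}\,dx\Bigr)^{1/2}
\le 2\alpha\cdot 2\,\|\nabla\theta\|_2\cdot 2^\kappa\|A^{\frac\kappa2}u\|_2,
\]
so after Young's inequality the cross term is controlled only by $\alpha\,2^{\kappa+1}\bigl(\|A^{\frac\kappa2}u\|_2^2+\|\nabla\theta\|_2^2\bigr)$, yielding the smaller range $\alpha<2^{-\kappa-1}$. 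No reweighting of Young's inequality recovers the missing factor of~$2$ (the product of the two weights is fixed), and the observation $|x|>1$ does not enter this particular estimate at all.

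The paper removes this loss by an integration by parts that your proposal does not mention. Since $\nabla\cdot u=0$, one has $u\cdot\nabla|x|^{-1}=\nabla\cdot\bigl(u\,|x|^{-1}\bigr)$, and hence
\[
\int_\Omega\theta\,(u\cdot g)\,dx
=\int_\Omega\theta\,\nabla\cdot\Bigl(\frac{u}{|x|}\Bigr)\,dx
=-\int_\Omega\frac{u}{|x|}\cdot\nabla\theta\,dx.
\]
Now $\nabla\theta$ appears directly, with no Hardy constant attached to it, and only Lemma~\ref{L03} is needed on $u/|x|$:
\[
\bigl|2\alpha\langle u\cdot g,\theta\rangle\bigr|
\le 2\alpha\,\Bigl\|\frac{u}{|x|}\Bigr\|_2\,\|\nabla\theta\|_2
\le 2\alpha\cdot 2^\kappa\|A^{\frac\kappa2}u\|_2\,\|\nabla\theta\|_2
\le \alpha\,2^\kappa\bigl(\|A^{\frac\kappa2}u\|_2^2+\|\nabla\theta\|_2^2\bigr),
\]
which lands exactly on the stated threshold. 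The missing idea is precisely this integration by parts exploiting the solenoidal constraint on $u$.
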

\begin{proof} Similar positivity  result with $\kappa=1$ was given in \cite{Chen1992}. However, we would like to provide a straightforward  proof for reader's convenience.

For $(\u,\theta)\in D(\L)$, we use integration by parts and H\"older inequality  to produce the positivity
\bbe \langle \L (\u,\theta),(\u,\theta) \rangle &=&
\langle A^\kappa   \u+ \alpha \theta\nabla\frac1{|x|},\u\rangle +\langle -\Delta \theta+\alpha\u\cdot \nabla \frac1{|x|},\theta\rangle \nonumber
\\
&=&
\|A^{\frac\kappa 2}  \u\|_2^2+\|\nabla  \theta\|_2^2+2\alpha \langle \u\cdot \nabla \frac1{|x|},\theta\rangle \nonumber
\\ &=&\|A^{\frac\kappa 2}  \u\|_2^2+\|\nabla  \theta\|_2^2-2\alpha \langle  \frac\u{|x|},\nabla\theta\rangle \nonumber
\\ &\ge &\|A^{\frac\kappa 2} \u\|_2^2+\|\nabla  \theta\|_2^2-2\alpha 2^\kappa  \|A^{\frac\kappa 2}u\|_2\|\nabla\theta\|_2\nonumber
\\ &\ge &\left(1-\alpha 2^\kappa \right) (\|A^{\frac\kappa 2}  \u\|_2^2+\|\nabla  \theta\|_2^2),\nonumber
\bee
where we have used Lemma \ref{L03}.

On the other hand, in order to apply  the Kato-Rellich Theorem for the self-adjointness, we adopt the operator decomposition $L=L_1+L_2$  so that
\bbe L_1(\u,\theta) = (A^\kappa u, -\Delta\theta ) \mbox{ and } L_2(\u,\theta) = ( \alpha P(\theta\nabla \frac1{|x|}), \alpha\u\cdot \nabla \frac1{|x|})
\bee
for $D(L_1) = D(A^\kappa)\times D(-\Delta)$ and $D(L_2)= L_\sigma^2(\Omega)^3 \times L^2(\Omega)$.

It is readily seen that the operator $L_2$ has the symmetric property
\bbe
\langle L_2 U, V\rangle =\langle   \alpha \theta\nabla\frac1{|x|}, v\rangle +\langle  \alpha\u\cdot \nabla \frac1{|x|}, \vartheta \rangle
 =\langle  U, L_2V\rangle
\bee
for $U=(\u,\theta), V=(v,\vartheta) \in D(L_2)$. Furthermore, since $|x|\ge 1$, we have

\be \| L_2 U\|_2 &\le& \alpha \|\frac U {|x|^2}\|_2 \le \alpha \| U\|_2,\,\, U\in D(L_1).
\ee
This shows the validity of (\ref{KR}) and hence the self-adjointness of $L$ due to Lemma \ref{LL2}.

\end{proof}

\begin{Lemma}\label{L05}If $0<\kappa  \le 1$ and $0<\alpha < 2^{- \kappa} $, then the operator $\L$ generates  a strongly continuous    analytic semigroup defined as  $e^{-t \L  }$ so that, for $U\in L^2_\sigma(\Omega)^3\times L^2(\Omega)$ and $n=0,1$,
\bbe\|\L^n   e^{-t\L } U\|_2 \le t^{-n} \|U\|_2,\,\,\,\,\lim_{t\to 0}\|  e^{-t\L } U-U\|_2 =0,\label{X1}
\bee
\bbe\label{XXX1} \|A^{\frac\kappa 2} P_1 e^{-t\L } U\|_2^2+\|\nabla P_2 e^{-t\L } U\|_2^2\le \frac{t^{-1}}{1-\alpha  2^\kappa  } \|U\|_2^2,
\bee
\bbe \lim_{t\to \infty}\|e^{-t\L } U\|_2=0,\label{X2}
\bee
Here  the projection operators are defined as
\bbe P_1(u,\theta)=(\u,0) \,\, \mbox{ and }\,\, P_2(u,\theta)=(0,\theta)\bee
for $(\u,\theta)\in L^2_\sigma(\Omega)^3\times L^2(\Omega)$.
\end{Lemma}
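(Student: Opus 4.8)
The plan is to run the whole statement through the spectral theorem, using the two facts supplied by Lemma~\ref{L04}: that $\L$ is self-adjoint and that it is nonnegative, since for $0<\alpha<2^{-\kappa}$ the lower bound gives $\langle \L U,U\rangle\ge (1-\alpha 2^\kappa)(\|\A^{\frac\kappa2}u\|_2^2+\|\nabla\theta\|_2^2)\ge 0$. Consequently the spectrum of $\L$ is contained in $[0,\infty)$, and I would write its spectral family as $\{G_\lambda\}_{\lambda\ge0}$, so that $\L=\int_0^\infty\lambda\,dG_\lambda$, and define the semigroup by the functional calculus $e^{-t\L}=\int_0^\infty e^{-t\lambda}\,dG_\lambda$. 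Strong continuity follows from $\|e^{-t\L}U-U\|_2^2=\int_0^\infty|e^{-t\lambda}-1|^2\,d\|G_\lambda U\|_2^2\to0$ as $t\to0^+$ by dominated convergence, and analyticity follows because $|e^{-z\lambda}|=e^{-(\mathrm{Re}\,z)\lambda}\le1$ for $\mathrm{Re}\,z\ge0$, so $z\mapsto e^{-z\L}U$ extends holomorphically to $\{\mathrm{Re}\,z>0\}$ with $\frac{d}{dz}e^{-z\L}=-\L e^{-z\L}$; this is the standard statement that a nonnegative self-adjoint operator generates a bounded analytic $C_0$-semigroup.

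For the operator bounds (\ref{X1}) I would insert the elementary scalar estimates $e^{-2t\lambda}\le1$ and $\sup_{\lambda\ge0}\lambda^2 e^{-2t\lambda}=(et)^{-2}$, attained at $\lambda=1/t$, into the Plancherel identities for the spectral measure, namely $\|e^{-t\L}U\|_2^2=\int_0^\infty e^{-2t\lambda}\,d\|G_\lambda U\|_2^2$ and $\|\L e^{-t\L}U\|_2^2=\int_0^\infty\lambda^2 e^{-2t\lambda}\,d\|G_\lambda U\|_2^2$; these give $\|e^{-t\L}U\|_2\le\|U\|_2$ and $\|\L e^{-t\L}U\|_2\le (et)^{-1}\|U\|_2\le t^{-1}\|U\|_2$. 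For (\ref{XXX1}) I would bound the quadratic form in the same way, using $\sup_{\lambda\ge0}\lambda e^{-2t\lambda}=(2et)^{-1}$ to get $\langle\L e^{-t\L}U,e^{-t\L}U\rangle=\int_0^\infty\lambda e^{-2t\lambda}\,d\|G_\lambda U\|_2^2\le(2et)^{-1}\|U\|_2^2\le t^{-1}\|U\|_2^2$. Since $e^{-t\L}U\in D(\L)$ for $t>0$, applying the lower bound of Lemma~\ref{L04} to $e^{-t\L}U=(P_1e^{-t\L}U,P_2e^{-t\L}U)$ yields $(1-\alpha 2^\kappa)(\|\A^{\frac\kappa2}P_1e^{-t\L}U\|_2^2+\|\nabla P_2e^{-t\L}U\|_2^2)\le\langle\L e^{-t\L}U,e^{-t\L}U\rangle\le t^{-1}\|U\|_2^2$, which is exactly (\ref{XXX1}).

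The delicate point is the decay (\ref{X2}). Starting again from $\|e^{-t\L}U\|_2^2=\int_0^\infty e^{-2t\lambda}\,d\|G_\lambda U\|_2^2$, dominated convergence (with the integrable dominating function $1$, the spectral measure being finite of total mass $\|U\|_2^2$) lets me pass to the limit inside the integral, so that $\lim_{t\to\infty}\|e^{-t\L}U\|_2^2=\|G(\{0\})U\|_2^2$, where $G(\{0\})$ is the spectral projection associated with the single point $\{0\}$, i.e.\ the orthogonal projection onto $\ker\L$. Thus (\ref{X2}) reduces to proving $\ker\L=\{0\}$. If $\L U=0$ with $U=(u,\theta)$, then $\langle\L U,U\rangle=0$, and Lemma~\ref{L04} forces $\|\A^{\frac\kappa2}u\|_2=0$ and $\|\nabla\theta\|_2=0$; Lemma~\ref{L02} then gives $\|u\|_p\le2^\kappa\|\A^{\frac\kappa2}u\|_2=0$ so $u=0$, while Lemma~\ref{L01} gives $\int_\Omega|\theta|^2/|x|^2\,dx\le4\|\nabla\theta\|_2^2=0$ so $\theta=0$. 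Hence $G(\{0\})=0$ and (\ref{X2}) holds.

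The main obstacle is this last step. Every other assertion is routine once the spectral representation is in hand, reducing to maximizing $\lambda^a e^{-2t\lambda}$ over $\lambda\ge0$; but (\ref{X2}) cannot follow from the pointwise decay $e^{-2t\lambda}\to0$ alone, because the nonnegativity in Lemma~\ref{L04} provides no spectral gap at the origin. I must instead rule out an atom of the spectral measure at $\{0\}$, which is precisely the injectivity of $\L$ extracted above from the strict positivity of the quadratic form together with the Hardy-type inequalities of Lemmas~\ref{L01} and \ref{L02}.
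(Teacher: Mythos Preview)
Your argument is correct. For (\ref{X1}) and (\ref{XXX1}) you proceed essentially as the paper does, via the spectral calculus; the only cosmetic difference is that for (\ref{XXX1}) you bound the quadratic form directly by $\sup_{\lambda\ge0}\lambda e^{-2t\lambda}\le t^{-1}$, whereas the paper writes $\langle\L e^{-t\L}U,e^{-t\L}U\rangle\le\|\L e^{-t\L}U\|_2\|e^{-t\L}U\|_2$ and then invokes (\ref{X1}).

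The genuine divergence is in (\ref{X2}). The paper does \emph{not} argue via the kernel of $\L$. Instead it fixes $V=(v,\vartheta)\in C^\infty_{0,\sigma}(\Omega)^3\times C^\infty_0(\Omega)$ and any $V'\in L^2_\sigma(\Omega)^3\times L^2(\Omega)$, and by duality, Sobolev embedding, Lemma~\ref{L02}, and the already-proved (\ref{XXX1}) obtains the quantitative bound $\langle e^{-t\L}V,V'\rangle\le Ct^{-1/2}(\|v\|_r+\|\vartheta\|_{6/5})\|V'\|_2$, hence $\|e^{-t\L}V\|_2\le Ct^{-1/2}(\|v\|_r+\|\vartheta\|_{6/5})$; then $\|e^{-t\L}U\|_2\le\|U-V\|_2+\|e^{-t\L}V\|_2$ and density finish the job. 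Your route---dominated convergence in the spectral integral reducing to $G(\{0\})=0$, then killing $\ker\L$ via Lemma~\ref{L04} together with Lemmas~\ref{L01} and \ref{L02}---is shorter and more self-contained for the bare qualitative statement $\|e^{-t\L}U\|_2\to0$. The paper's detour buys an explicit $t^{-1/2}$ rate on smooth compactly supported data, which is in the spirit of the duality estimates used later in Sections~4--5, but is not strictly needed for Lemma~\ref{L05} itself.
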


\begin{proof}
 By Lemma \ref{L04}, we have the   spectral representation (see, for example,  Yosida \cite[Page 313, Theorem 1]{Yosida})
\bbe \L = \int^\infty_0 \lambda d\E_\lambda
\bee
with $\E_\lambda$ the spectral resolution of the unit determined by the operator $L$.
Consequently, equation  (\ref{X1}) is verified by
\bbe \|\L  ^ne^{-t\L } U\|_2^2= \int^\infty_0 \lambda^{2n} e^{-2t\lambda} d\|\E_\lambda U\|_2^2\le t^{-2n} \|U\|_2^2,\,\,\, n=0, 1, \label{semi}
\bee
and
\begin{eqnarray*}
\|(e^{-t \L}-1) U\|_2 &\le& \|(e^{-t \L}-1) (U-V)\|_2+\|(e^{-t \L}-1) V\|_2
\\
&\le& \|U-V\|_2+t\left( \int^\infty_0 \lambda^2 d \|\E_\lambda V\|^2_2\right)^\frac12= \|U-V\|_2+ t \|L V\|_2
\end{eqnarray*}
for $V\in C^\infty_{0,\sigma}(\Omega)^3\times C_0^\infty(\Omega)$.
Equation (\ref{X1}) ensures the strongly continuous  analytic semigroup  property.

For the derivation of (\ref{XXX1}), by Lemma \ref{L04} and H\"older inequality,
we have
\bbe (1-\alpha  2^\kappa  ) \left(\| A^{\frac\kappa 2}P_1e^{-t\L }U\|_2^2+\|\nabla P_2e^{-t\L }U\|_2^2\right) &\le & \langle  \L   e^{-t\L }U,e^{-tL}U\rangle\nonumber
\\
&\le& \|\L   e^{-t\L }U\|_2\|e^{-t\L }U\|_2\nonumber
\\
&\le&t^{-1} \|U\|_2^2.\label{kappa}
\bee

For the validation of the decay property (\ref{X2}), we take $V=(v,\vartheta)\in C^\infty_{0,\sigma}(\Omega)^3\times C^\infty_0(\Omega)$, $V' \in L^2_\sigma(\Omega)^3\times L^2(\Omega)$ and $\frac1r=\frac12-\frac\kappa3$. By H\"older inequality, (\ref{miya}), Sobolev imbedding and (\ref{XXX1}),  we see that
 \be \langle e^{-t\L}V, V'\rangle &\le& \|v\|_r \|P_1e^{-t\L}V'\|_q+ \|\vartheta\|_{\frac65}\|P_2e^{-t\L} V'\|_6
 \\
 &\le& C\|v\|_r \|A^\frac\kappa2P_1 e^{-t\L}V'\|_2+ C\|\vartheta\|_{\frac65}\|\nabla P_2e^{-t\L} V'\|_2
 \\
 &\le& Ct^{-\frac12}(\|v\|_r +\|\vartheta\|_{\frac65})\| V'\|_2.
 \ee
 This together with (\ref{X1})  yields
 \bbe \|e^{-t \L}U\|_2 &\le& \|e^{-t\L} (U-V)\|_2 +\|e^{-t\L}V\|_2\nonumber
 \\
 &\le& \|U-V\|_2 +Ct^{-\frac12}(\|v\|_r+ \|\vartheta\|_\frac65).
 \bee
 This implies (\ref{X2}), after taking the limit $t\to \infty$ and using the density of
 the set $C^\infty_{0,\sigma}(\Omega)^3\times C^\infty_0(\Omega)$ in $ L^2_\sigma(\Omega)^3\times L^2(\Omega)$.

\end{proof}

\begin{Lemma} \label{L06} Let $0<\kappa \le 1$,  $0<\alpha\le  2^{-\kappa} $ and $U=(\u,\theta)\in L^2_\sigma(\Omega)^3\times L^2(\Omega)$. Then we have
\bbe\|A^\kappa  P_1 e^{-t\L }U \|_2\le  \left(t^{-1} +t^{-\frac12}\frac{2\alpha}{\sqrt{1-\alpha  2^\kappa  }}     \right)   \|U \|_2.\label{o1}\bee
\end{Lemma}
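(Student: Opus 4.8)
The plan is to extract $\A^\kappa\u$ directly from the evolution equation satisfied by the semigroup and then estimate the two resulting pieces separately. Writing $(\u,\theta)=e^{-t\L}U$, this pair solves $\p_t(\u,\theta)=-\L(\u,\theta)$, whose first component reads
\[
\p_t\u+\A^\kappa\u+\alpha P\!\left(\theta\nabla\tfrac1{|x|}\right)=0 .
\]
Solving for $\A^\kappa\u$ and recalling that $\A^\kappa P_1 e^{-t\L}U=(\A^\kappa\u,0)$, the triangle inequality gives
\[
\|\A^\kappa P_1 e^{-t\L}U\|_2\le\|\p_t\u\|_2+\alpha\left\|P\!\left(\theta\nabla\tfrac1{|x|}\right)\right\|_2 ,
\]
so it suffices to bound the time-derivative term and the coupling term. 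Note that $\theta=P_2 e^{-t\L}U$ belongs to $W^{1,2}_0(\Omega)$, so the auxiliary inequalities below apply by density.

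For the first term I would use the analyticity of the semigroup. Since $\p_t\u=-P_1\L e^{-t\L}U$, we have $\|\p_t\u\|_2\le\|\L e^{-t\L}U\|_2\le t^{-1}\|U\|_2$ by the first estimate in (\ref{X1}). For the coupling term, because $P$ is an orthogonal projection of norm at most one, $\|P(\theta\nabla\frac1{|x|})\|_2\le\|\theta\nabla\frac1{|x|}\|_2$; using $|\nabla\frac1{|x|}|=|x|^{-2}\le|x|^{-1}$ on $\Omega$ together with Lemma \ref{L01} applied to $\theta$ yields
\[
\left\|\theta\nabla\tfrac1{|x|}\right\|_2\le\left(\int_\Omega\frac{|\theta|^2}{|x|^2}\,dx\right)^{\frac12}\le 2\|\nabla\theta\|_2 .
\]
Finally the energy estimate (\ref{XXX1}) controls $\|\nabla\theta\|_2=\|\nabla P_2 e^{-t\L}U\|_2\le t^{-\frac12}(1-\alpha2^\kappa)^{-\frac12}\|U\|_2$. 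Combining the three bounds produces exactly the claimed inequality
\[
\|\A^\kappa P_1 e^{-t\L}U\|_2\le\left(t^{-1}+t^{-\frac12}\frac{2\alpha}{\sqrt{1-\alpha2^\kappa}}\right)\|U\|_2 .
\]

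The chief subtlety, and the reason (\ref{o1}) is not immediate from (\ref{X1}), is that the analyticity bound controls only the full operator norm $\|\L e^{-t\L}U\|_2$ and not $\|\A^\kappa\u\|_2$ by itself, since $\L$ couples the velocity and temperature through the buoyancy and advection terms. The way around this is to regard the coupling contribution $\alpha P(\theta\nabla\frac1{|x|})$ as a genuinely lower-order perturbation: it is only ``half as singular'' in $t$, carrying a factor $t^{-1/2}$ from the gradient bound on $\theta$ rather than the $t^{-1}$ of the leading term, and it is tamed by combining the Hardy-type inequality of Lemma \ref{L01} with the dissipation estimate (\ref{XXX1}). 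The only technical care needed is the density argument justifying Lemma \ref{L01} for $\theta(t)\in W^{1,2}_0(\Omega)$, which is routine.
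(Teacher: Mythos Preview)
Your proof is correct and follows essentially the same route as the paper: both isolate $\A^\kappa\u$ from the first component of $\L e^{-t\L}U$ via the triangle inequality, bound the main piece by $\|\L e^{-t\L}U\|_2\le t^{-1}\|U\|_2$, and control the coupling term $\alpha P(\theta\nabla|x|^{-1})$ through $|x|^{-2}\le|x|^{-1}$, Lemma~\ref{L01}, and the dissipation estimate on $\|\nabla\theta\|_2$. The only cosmetic difference is that you frame the main piece as $\|\p_t\u\|_2$ while the paper writes it directly as $\|\A^\kappa\u+\alpha P(\theta\nabla\tfrac1{|x|})\|_2\le\|\L U\|_2$, and you invoke (\ref{XXX1}) whereas the paper re-derives that bound on the spot from Lemma~\ref{L04} and Cauchy--Schwarz; the content is the same.
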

\begin{proof}
Since $|x|>1$ in $\Omega$, it follows from Lemmas \ref{L01} and \ref{L04} that
\bbe \|A^{\kappa } \u\|_2&\le & \|A^{\kappa } \u + \alpha P(\theta \nabla \frac1{|x|})\|_2+\alpha \|  P(\theta \nabla \frac1{|x|})\|_2\nonumber
\\
&\le & \|A^{\kappa } \u + \alpha P(\theta \nabla \frac1{|x|})\|_2+\alpha \|  \frac\theta{|x|^2}\|_2\nonumber
\\
&\le & \|A^{\kappa } \u + \alpha P(\theta \nabla \frac1{|x|})\|_2+\alpha\|  \frac\theta{x}\|_2\nonumber 
\\
&\le & \|\L U\|_2+ 2\alpha \|\nabla \theta\|_2\nonumber
\\
&\le& \|\L U\|_2 + \frac{2\alpha}{\sqrt{1-\alpha  2^\kappa  }}\sqrt{\langle \L U,U\rangle}. \label{new1}
\bee
 Thus (\ref{o1}) is obtained from the combination of (\ref{X1}) and the following inequality
\bbe\label{fin1}
\|A^{\kappa } P_1e^{-t\L} U\|_2 &\le& \|\L e^{-t\L}U\|_2 + \frac{2\alpha}{\sqrt{1-\alpha  2^\kappa  }}\sqrt{\| \L e^{-t\L}U\|_2\| e^{-t\L}U\|_2 }.
\bee
\end{proof}

\begin{Lemma}\label{L07} For $\frac12\le\kappa  \le 1$,  $0<\alpha\le  2^{-\kappa} $ and $U= (\u,\theta)\in L_\sigma^2(\Omega)^3\times L^2(\Omega)$, we have
\bbe\|\nabla e^{-t\L } U \|_2 &\le & C (t^{-\frac12}+t^{-\frac1{2\kappa }})\|U\|_2.
\bee
\end{Lemma}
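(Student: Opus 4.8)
The plan is to split $e^{-t\L}U=(P_1 e^{-t\L}U,\,P_2 e^{-t\L}U)$ and treat the two gradient contributions separately, writing $\|\nabla e^{-t\L}U\|_2^2=\|\nabla P_1 e^{-t\L}U\|_2^2+\|\nabla P_2 e^{-t\L}U\|_2^2$. The temperature part is already available: inequality (\ref{XXX1}) gives directly
\be \|\nabla P_2 e^{-t\L}U\|_2\le \frac{t^{-\frac12}}{\sqrt{1-\alpha 2^\kappa}}\,\|U\|_2, \ee
which is of the claimed form. So the substance of the argument is the velocity part $\|\nabla P_1 e^{-t\L}U\|_2$.

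For the velocity component I would first convert the gradient into a fractional Stokes norm. Since $\u:=P_1 e^{-t\L}U$ is solenoidal and vanishes on $\p\Omega$, integration by parts gives $\langle A\u,\u\rangle=\langle-\Delta\u,\u\rangle=\|\nabla\u\|_2^2$, while the spectral representation (\ref{Asp}) gives $\langle A\u,\u\rangle=\|A^{\frac12}\u\|_2^2$; hence $\|\nabla\u\|_2=\|A^{\frac12}\u\|_2$, and it suffices to bound $\|A^{\frac12}P_1 e^{-t\L}U\|_2$. The role of the hypothesis $\frac12\le\kappa\le1$ is precisely that the order $\frac12$ lies between $\frac\kappa2$ and $\kappa$, the two orders for which decay is already known: (\ref{XXX1}) controls $\|A^{\frac\kappa2}P_1 e^{-t\L}U\|_2\le C t^{-\frac12}\|U\|_2$, and Lemma \ref{L06} controls $\|A^{\kappa}P_1 e^{-t\L}U\|_2\le C(t^{-1}+t^{-\frac12})\|U\|_2$.

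I would therefore interpolate. Because $A$ is nonnegative self-adjoint, the spectral theorem together with H\"older's inequality yields the moment inequality
\be \|A^{\frac12}w\|_2\le \|A^{\frac\kappa2}w\|_2^{1-\eta}\,\|A^{\kappa}w\|_2^{\eta},\qquad \eta=\tfrac1\kappa-1\in[0,1], \ee
valid for $w\in D(A^\kappa)$, where $\eta$ is determined by $\frac12=(1-\eta)\frac\kappa2+\eta\kappa$. Applying this with $w=P_1 e^{-t\L}U$ and inserting the two bounds above gives
\be \|A^{\frac12}P_1 e^{-t\L}U\|_2\le C\,t^{-\frac{1-\eta}{2}}\big(t^{-1}+t^{-\frac12}\big)^{\eta}\|U\|_2. \ee
A short case analysis on the dominant term of $t^{-1}+t^{-\frac12}$ then closes the estimate. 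For $t\le1$ the factor is comparable to $t^{-1}$, producing the exponent $\frac{1-\eta}{2}+\eta=\frac{1+\eta}{2}=\frac1{2\kappa}$ (using $1+\eta=\frac1\kappa$); for $t\ge1$ it is comparable to $t^{-\frac12}$, producing $\frac{1-\eta}{2}+\frac\eta2=\frac12$. Combining the two regimes with the temperature estimate yields $\|\nabla e^{-t\L}U\|_2\le C(t^{-\frac12}+t^{-\frac1{2\kappa}})\|U\|_2$.

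The only delicate points are bookkeeping ones: tracking which of $t^{-1}$, $t^{-\frac12}$ dominates so that the exponent $\frac1{2\kappa}$ emerges cleanly, and verifying that $w=P_1 e^{-t\L}U$ genuinely lies in $D(A^\kappa)$ so that Lemma \ref{L06} and the moment inequality apply. The latter is immediate from the analyticity of $e^{-t\L}$ established in Lemma \ref{L05}, since $e^{-t\L}U\in D(\L)\subset D(A^\kappa)\times W^{2,2}(\Omega)$, and $\kappa\ge\frac12$ ensures $D(A^\kappa)\subset D(A^{\frac12})$ so the identity $\|\nabla\u\|_2=\|A^{\frac12}\u\|_2$ is legitimate.
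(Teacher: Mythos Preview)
Your proof is correct and follows essentially the same route as the paper: split into the $P_1$ and $P_2$ components, handle the temperature part via (\ref{XXX1}), and for the velocity part interpolate $\|A^{\frac12}\u\|_2$ between $\|A^{\frac\kappa2}\u\|_2$ and $\|A^{\kappa}\u\|_2$ using the spectral moment inequality with exponent $\eta=\tfrac1\kappa-1$, then invoke (\ref{XXX1}) and Lemma~\ref{L06}. The only cosmetic difference is that the paper writes the final simplification as a single inequality chain rather than a $t\le1$ / $t\ge1$ case split, but the arithmetic is identical.
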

\begin{proof}
By Lemmas \ref{L04} and \ref{L05}, we have
\bbe \|\nabla P_2 e^{-t\L } U \|_2&\le &\frac1{\sqrt{1-\alpha 2^\kappa}}\sqrt{\|\L e^{-t\L } U \|_2\| e^{-t\L } U \|_2}\nonumber
\\
&\le& \frac{t^{-\frac12}}{\sqrt{1-\alpha 2^\kappa}}\|U\|_2.\label{D}
\bee
This gives the estimate with respect to the second component. For the estimate with respect to another component, we  employ the  condition  $\frac12 \le \kappa  \le 1$, the spectral representation  and  H\"older inequality to produce
\bbe \|\nabla u\|_2=\|A^{\frac12} \u\|_2
&\le& \|A^{\frac\kappa 2}\u\|_2^{2-\frac1\kappa }\|A^\kappa  \u\|_2^{\frac1\kappa -1}.\label{new2}
\bee
Hence, by Lemma \ref{L04} and H\"older inequality, it follows that
\be
\|\nabla P_1 e^{-t\L } U \|_2
&\le &\|A^{\frac\kappa 2}P_1e^{-t\L } U\|_2^{2-\frac1\kappa }\|A^\kappa  P_1e^{-t\L } U\|_2^{\frac1\kappa -1}
\\
&\le &\left(\frac{\langle  \L e^{-t\L}U, e^{-t \L}U\rangle}{1-\alpha 2^\kappa}\right) ^{\frac12(2-\frac1{\kappa })}\|A^\kappa  P_1e^{-t\L } U\|_2^{\frac1\kappa -1}.
\ee
With the use of Lemmas \ref{L05} and \ref{L06}, the previous equation is bounded by
\be
\lefteqn{\left(\frac{t^{-1}}{1-\alpha 2^\kappa}\|U\|_2^2\right)^{\frac12(2-\frac1{\kappa })}\left(\left(t^{-1} +\frac{2\alpha t^{-\frac12}}{\sqrt{1-\alpha  2^\kappa  }}     \right)  \|U\|_2\right)^{\frac1\kappa -1}}
\\
&\le &Ct^{-1+\frac1{2\kappa }}(t^{-\frac12}+t^{-1})^{\frac1\kappa -1}\|U\|_2
\\
&\le &
 C(t^{-\frac12}+t^{-\frac1{2\kappa }})\|U\|_2.
\ee
The proof is complete.
\end{proof}

\section{Existence of weak solutions}

 Rewrite (\ref{a3}) as
\bbe \p_t U + \L U = -(P(\u\cdot \nabla u), u\cdot\nabla \theta),\,\,\,\, U(0) = U_0=(\a,b)\label{xx1}\bee
for $U=(\u,\theta)$.
With the use of the mollification operators
\bbe J_n= n(n+A)^{-1}\,\,\,\mbox{ and }\,\,\, I_n= n(n+\L)^{-1}
\bee
for  positive integers $n$,  we  construct approximation solutions to (\ref{xx1}) by solving the equation
\bbe \p_t U_n + \L U_n = -(P(J_n u_n\cdot \nabla u_n), J_n u_n\cdot\nabla \theta_n);   \,\,\,\, U_n(0) =I_n U_0\label{xx2}\bee
with $U_n = (\u_n,\theta_n)$.
By the spectral representations of $A$ and $L$, we have
\bbe \|J_n \u\|_2\le \|\u\|_2,\,\,\, \|I_n U\|_2\le \|U\|_2, \label{JkIk}\bee
\bbe \lim_{n\to \infty } (\|J_n \u- \u \|_2+ \| I_n U-U\|_2)=0,\bee
and
\bbe \| J_n\u\|_\infty\le C_n \|\u\|_2; \,\,\,\, \| L I_n U\|_2\le C_n \|U\|_2,\label{JkIk1}\bee
after the use of Sobolev imbedding.

\begin{Proposition}
 For  $\frac12 <\kappa \le 1$, $0<\alpha <2^{-\kappa}$,  integer $n\ge 1$, $U_0\in L^2_\sigma(\Omega)^3\times L^2(\Omega)$ and $\tau>0$, then  (\ref{xx2}) admits  a unique solution $U_n=(u_n,\theta_n)$ so that
 \bbe U_n\in  L^2(0,\tau; D(\L))\cap W^{1,2}(0,\tau; L^2_\sigma(\Omega)^3\times L^2(\Omega)).
 \bee

\end{Proposition}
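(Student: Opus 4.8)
The plan is to recast (\ref{xx2}) as a mild integral equation for the analytic semigroup $e^{-t\L}$ of Lemma \ref{L05}, solve it locally by a contraction, globalize it by an energy bound, and finally upgrade the regularity. Setting $F(U)=(P(J_n\u\cdot\nabla\u),\,J_n\u\cdot\nabla\theta)$ for $U=(\u,\theta)$, equation (\ref{xx2}) is equivalent to
\[ U_n(t)=e^{-t\L}I_nU_0-\int_0^t e^{-(t-s)\L}F(U_n(s))\,ds. \]
The decisive structural point is that $J_n\u=n(n+\A)^{-1}\u$ is solenoidal, so $F(U)=\nabla\cdot G$ in the distributional sense with $G=(J_n\u\otimes\u,\,J_n\u\,\theta)$; together with the mollifier bound $\|J_n\u\|_\infty\le C_n\|\u\|_2$ from (\ref{JkIk1}) this gives $\|G\|_2\le C_n\|\u\|_2\,\|U\|_2$, so one derivative can be transferred onto the semigroup and the remaining factor is then controlled by $\|U\|_2$ alone, with \emph{no} gradient control of $U$ required in the fixed-point space.

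First I would run a contraction in $X_T=C([0,T];L^2_\sigma(\Omega)^3\times L^2(\Omega))$. Writing $e^{-\tau\L}F(U)=e^{-\tau\L}\nabla\cdot G$ and using the self-adjointness of $\L$ with the gradient bound of Lemma \ref{L07} in duality yields
\[ \|e^{-\tau\L}F(U)\|_2\le C\big(\tau^{-\tfrac12}+\tau^{-\tfrac1{2\kappa}}\big)\|G\|_2\le C_n\big(\tau^{-\tfrac12}+\tau^{-\tfrac1{2\kappa}}\big)\|U\|_2^2. \]
Since $\kappa>\tfrac12$ forces $\tfrac1{2\kappa}<1$, this kernel is integrable in time and
\[ \sup_{0\le t\le T}\left\|\int_0^t e^{-(t-s)\L}F(U(s))\,ds\right\|_2\le C_n\big(T^{\tfrac12}+T^{1-\tfrac1{2\kappa}}\big)\|U\|_{X_T}^2. \]
The bilinearity of $G$ furnishes the matching Lipschitz estimate, with $\|U\|_{X_T}^2$ replaced by $(\|U\|_{X_T}+\|V\|_{X_T})\|U-V\|_{X_T}$, so on the ball of radius $2\|U_0\|_2$ the map is a contraction once $T=T(n,\|U_0\|_2)$ is small; strong continuity of $e^{-t\L}$ gives continuity up to $t=0$ with $U_n(0)=I_nU_0$. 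This produces a unique local solution and already settles the uniqueness assertion. Global continuation rests on an a priori $L^2$ bound: because $J_n\u_n$ is divergence-free the mollified convection is energy neutral, $\langle F(U_n),U_n\rangle=0$, so pairing (\ref{xx2}) with $U_n$ and applying the coercivity of Lemma \ref{L04} gives
\[ \tfrac12\tfrac{d}{dt}\|U_n\|_2^2+(1-\alpha2^\kappa)\big(\|\A^{\frac\kappa2}\u_n\|_2^2+\|\nabla\theta_n\|_2^2\big)\le0. \]
Hence $\|U_n(t)\|_2\le\|I_nU_0\|_2\le\|U_0\|_2$ and $\int_0^\tau(\|\A^{\frac\kappa2}\u_n\|_2^2+\|\nabla\theta_n\|_2^2)\,dt\le C_n$; since the local existence time depends only on $n$ and on the non-increasing quantity $\|U_n(t)\|_2$, the solution extends to $[0,\tau]$ for every $\tau>0$.

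For the stated regularity $U_n\in L^2(0,\tau;D(\L))\cap W^{1,2}(0,\tau;L^2)$ I would pair (\ref{xx2}) with $\L U_n$, which is legitimate since $I_nU_0\in D(\L)$, $\L=\L^*$, and $U_n(t)\in D(\L)$ for $t>0$ by analyticity (so the estimate is derived on $[\varepsilon,\tau]$ and passed to $\varepsilon\to0$), to obtain
\[ \tfrac12\tfrac{d}{dt}\langle\L U_n,U_n\rangle+\tfrac12\|\L U_n\|_2^2\le\tfrac12\|F(U_n)\|_2^2\le C_n\big(\|\nabla\u_n\|_2^2+\|\nabla\theta_n\|_2^2\big). \]
The main obstacle surfaces here: the dissipation only furnishes $\|\A^{\frac\kappa2}\u_n\|_2$, whereas the nonlinearity generates $\|\nabla\u_n\|_2=\|\A^{\frac12}\u_n\|_2$, and for $\kappa<1$ these norms genuinely differ. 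I would close the inequality through the interpolation (\ref{new2}), namely $\|\A^{\frac12}\u_n\|_2\le\|\A^{\frac\kappa2}\u_n\|_2^{\,2-\frac1\kappa}\|\A^{\kappa}\u_n\|_2^{\,\frac1\kappa-1}$, which is admissible precisely for $\tfrac12<\kappa\le1$, together with $\|\A^{\kappa}\u_n\|_2\le\|\L U_n\|_2+C\sqrt{\langle\L U_n,U_n\rangle}$ from (\ref{new1}). Because $\tfrac1\kappa-1<1$ when $\kappa>\tfrac12$, a weighted Young inequality absorbs the top-order contribution into $\tfrac12\|\L U_n\|_2^2$ plus a multiple of $\langle\L U_n,U_n\rangle$, leaving on the right only quantities that are already integrable in time by the first energy estimate; Grönwall's inequality then bounds $\sup_{[0,\tau]}\langle\L U_n,U_n\rangle$ and $\int_0^\tau\|\L U_n\|_2^2\,dt$. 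This yields $\L U_n\in L^2(0,\tau;L^2)$, i.e. $U_n\in L^2(0,\tau;D(\L))$, and finally $\p_tU_n=-\L U_n-F(U_n)\in L^2(0,\tau;L^2)$ supplies the $W^{1,2}$-in-time regularity. The delicate point throughout is exactly this balance between the weak fractional dissipation $\A^\kappa$ and the first-order nonlinearity, which is what the hypothesis $\kappa>\tfrac12$ secures in both the contraction and the regularity bootstrap.
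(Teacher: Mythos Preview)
Your contraction runs in $C([0,T];L^2)$ via the duality trick $e^{-\tau\L}F(U)=e^{-\tau\L}\nabla\cdot G$ with $\|G\|_2\le C_n\|U\|_2^2$, whereas the paper runs the contraction directly in $L^\infty(0,\tau;W^{1,2}_{0,\sigma}\times W^{1,2}_0)$, keeping the gradient in the fixed-point norm and estimating $\|J_n u\cdot\nabla U\|_2\le\|J_n u\|_\infty\|\nabla U\|_2$ without integration by parts. Your route is more economical for local existence and makes global continuation immediate (the local time depends only on $\|U\|_2$, which is nonincreasing), while the paper must separately bound $\|\nabla U_n(t)\|_2$ via a Gronwall argument on the integral equation before extending.

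The gap is in the regularity upgrade. The paper's contraction space pays off precisely here: since $U_n\in L^\infty(0,\tau;W^{1,2})$ already, the forcing $F(U_n)$ lies in $L^\infty(0,\tau;L^2)\subset L^2(0,\tau;L^2)$, and the paper then simply invokes abstract $L^2$ maximal regularity for $\partial_tU+\L U=f$ (de~Simon, Miyakawa--Sohr) to conclude $U_n\in L^2(0,\tau;D(\L))\cap W^{1,2}(0,\tau;L^2)$. Your pairing with $\L U_n$ and the interpolation/Young closure are correct \emph{as a formal a priori estimate}, but the justification ``$U_n(t)\in D(\L)$ for $t>0$ by analyticity'' is not supplied by your $L^2$ mild formulation: duality only gives $\|\L e^{-\tau\L}\nabla\cdot G\|_2\le C(\tau^{-3/2}+\tau^{-1-1/(2\kappa)})\|G\|_2$, a non-integrable singularity, so $\L U_n(t)\in L^2$ does not follow from the Duhamel integral with $G\in L^\infty_tL^2_x$ alone. (The same issue quietly affects your energy identity, which also requires more regularity than a bare $C([0,T];L^2)$ mild solution guarantees.) To make your route rigorous you would need either a further approximation layer on which the pairing is legal, or to run the contraction in a space strong enough that $F(U_n)\in L^2_tL^2_x$ is automatic---which is exactly what the paper's $W^{1,2}$ choice buys.
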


\begin{proof}
Let us begin with the  existence of the approximate solutions to the integral equation of (\ref{xx1}):
\bbe\label{int} U_n(t) = e^{-t\L} I_n U_0-\int^t_0 e^{-(t-s)\L} (P(J_n\u_n\cdot \nabla u_n), J_n\u_n\cdot \nabla \theta_n) ds
\bee
in the  space
$ L^\infty(0,\tau; W_{0,\sigma}^{1,2}(\Omega)^3\times W_0^{1,2}(\Omega)),$
 where $ W_{0,\sigma}^{1,2}(\Omega)^3$ denotes the closure of $C^\infty_{0,\sigma}(\Omega)^3$ in the Sobolev space $W^{1,2}(\Omega)^3$.
The solution existence is to be obtained by  the contraction mapping principle in the complete metric space
\be X &=& \left\{ U =(u,\theta)\in L^\infty\left(0,\tau; W_{0,\sigma}^{1,2}(\Omega)^3\times W_0^{1,2}(\Omega)\right); \right. 
\\
&&\left.\|U\|_X = \esssup_{0< t< \tau} (\|U(t)\|_2 + \|\nabla  U(t)\|_2)\le M\right\}\ee
 with respect to the operator
\be F_n U(t) = e^{-t\L} I_n U_0-\int^t_0 e^{-(t-s)\L} (P( J_n\u\cdot \nabla u),J_nu\cdot \nabla \theta) ds,\,\, U=(u,\theta).
\ee

By  (\ref{new2}), Lemma \ref{L04} and  the observation $|x|>1$ in $\Omega$,  we have
\begin{align*}\|&\nabla e^{-t \L} I_n U_0\|_2
\\
 &\le \|A^{\kappa /2} P_1e^{-t \L} I_n U_0\|_2^{2-\frac1\kappa } \|A^\kappa  P_1e^{-t \L} I_n U_0\|_2^{\frac1\kappa -1} +\|\nabla P_2e^{-t \L} I_n U_0\|_2
\\
&\le \left(\frac{\|\L e^{-t \L} \! I_n U_0\|_2\| e^{-t \L}\! I_n U_0\|_2}{1-\alpha 2^\kappa}\right)^{1-\frac1{2\kappa }}\!\!\!\!\! \left(\|\L e^{-t \L} \! I_n U_0\|_2 \!+\! \alpha \|P_2 e^{-t \L} \! I_n U_0\|_2\right)^{\frac1\kappa -1}
\\&+\frac1{\sqrt{1-\alpha 2^\kappa}}\|\L e^{-t \L} I_n U_0\|_2^\frac12 \| e^{-t \L} I_n U_0\|_2^\frac12.
\end{align*}
It follows from Lemma \ref{L05}  that  the previous equation is bounded by
\be
C(\|\L I_n U_0\|_2\|  I_n U_0\|_2)^{1-\frac1{2\kappa }} (\|\L  I_n U_0\|_2+  \| I_n U_0\|_2)^{\frac1\kappa -1} +C\|\L  I_n U_0\|_2^\frac12 \|  I_n U_0\|_2^\frac12.\ee
Hence, employing (\ref{JkIk}) and  (\ref{JkIk1}), we obtain
\be
\|\nabla e^{-t \L} I_n U_0\|_2&\le & C_n\|  U_0\|_2,
\ee
and, by  Lemma \ref{L05} and for  a sufficiently large $M$ depending on $n$,
\bbe \| e^{-t \L} I_n U_0\|_2 +\|\nabla e^{-t \L} I_n U_0\|_2 &\le & C_n\|U_0\|_2  \label{new111}
\le \frac M2.
\bee

On the other hand,  for  $U=(\u,\theta)\in  X$, we use Lemma \ref{L05}, Lemma  \ref{L07},   (\ref{JkIk1}) and (\ref{new111})  to produce
\bbe
\lefteqn{\|F_n U(t)\|_2 + \|\nabla F_n U(t)\|_2}\nonumber\\
&\le& \| e^{-t\L} I_n U_0\|_2+\| \nabla e^{-t\L} I_n U_0\|_2\nonumber
+\int^t_0 \|e^{-(t-s)\L}  (P(J_n\u\cdot \nabla u), J_nu\cdot \nabla \theta)\|_2 ds\nonumber
\\
&&+\int^t_0 \|\nabla e^{-(t-s)\L} (P(J_n\u\cdot \nabla u), J_nu\cdot \nabla \theta)\|_2 ds\nonumber
\\
&\le & \frac M2+C\int^t_0 (1+(t-s)^{-\frac12} +(t-s)^{-\frac1{2\kappa }}) \| J_n\u\cdot \nabla U\|_2 ds\nonumber
\\
&\le &\frac M2+C\int^t_0 (1+(t-s)^{-\frac12} +(t-s)^{-\frac1{2\kappa }}) \| J_n\u\|_\infty \| \nabla U\|_2 ds\nonumber
\\
&\le & \frac M2+C_n\int^t_0 (1+(t-s)^{-\frac12} +(t-s)^{-\frac1{2\kappa }}) \|\u\|_2 \|\nabla U\|_2 ds\label{1oo1}
\\
&\le &\frac M2+ C_n(\tau+\tau^{\frac12} +\tau^{1-\frac1{2\kappa }}) M^2\le M,\nonumber
\bee
provided that $\tau$ is sufficiently small.
Similarly, for  $U,\,U'\in X$, we have the contraction property
\be
\|F_n U-F_n U'\|_X
&\le & C_n(\tau +\tau^{1/2}+\tau^{1-\frac1{2\kappa }}) M\|U-U'\|_X\le \frac12 \|U-U'\|_X,
\ee
when $\tau$ is sufficiently small. Thus $F_n$ is a contraction operator mapping $ X$ into itself, and so $F_n$ admits a unique fix point $U_n\in  X$ satisfying  integral equation (\ref{int}). We thus obtain local existence result.

To show the solution  global existence, we  adopt  the $L^2$ theory  of linear parabolic equations \cite{Simon} or Miyakawa and Sohr \cite{Miya1988} to obtain that
\be
U_n \in L^2\left(0, \tau; D(\L )\right) \mbox{ and } \p_t U_n \in
L^2\left(0, \tau; L^2_\sigma(\Omega)^3 \times L^2(\Omega)\right)
\ee
 and $U_n$ solves     the approximate equation (\ref{xx2}). To give the local solution extension, we have to provide the boundedness of $\|U_n(t)\|_X$  for $t\in[0,\tau)$ with respect to  any given constant  $\tau>0$.

 Indeed, we obtain from (\ref{xx2}) and the divergence free condition of $J_n\u$ that
\bbe \langle \p_t U_n, U_n\rangle  + \langle \L U_n, U_n\rangle  = -\langle  (J_n \u_n\cdot \nabla U_n),U_n\rangle =0.  \label{xx2x}
\bee
Hence, after integration with respect to $t$ and the use of Lemma \ref{L04},
\bbe
\|U_n(t)\|_2^2 +2(1-\alpha  2^\kappa  )\int^t_0(\|A^\frac\kappa 2\u_n(s)\|_2^2+\|\nabla \theta_n(s)\|_2^2)ds\le  \|U_n(0)\|_2^2\le \|U_0\|_2^2.\label{new112}
\bee
This shows the uniform boundedness of $\|U_n(t)\|_2$ for $t\ge 0$.

For the boundedness of $\|\nabla U_n(t)\|_2$,  we use (\ref{xx2}),   (\ref{new111}),  (\ref{new112}) and the derivation of (\ref{1oo1})  to produce
\be
\|\nabla U_n(t)\|_2&\le & C_n\|U_0\|_2+C_n\int^t_0 ((t-s)^{-\frac12} +(t-s)^{-\frac1{2\kappa }}) \|\u_n(s)\|_2 \|\nabla U_n(s)\|_2 ds
\\
&\le & C_n+C_n\int^t_0 ((t-s)^{-\frac12} +(t-s)^{-\frac1{2\kappa }})  \|\nabla U_n(s)\|_2 ds.
 \ee
 Hence,  we have the estimate involving an exponential-type function  expressed as 
 \bbe\label{kkka} \|\nabla U_n(t)\|_2 
&\le&  C_n\sum_{m=0}^\infty  \frac{ (C_n (\tau^{1-\frac1{2\kappa}}+\tau^\frac12) )^m}{\Gamma ( m(1-\frac1{2\kappa}) +1)}, \,\,\, 0\le t <\tau
 \bee
 which gives the desired bound after taking  $t \to \tau$. The validity of (\ref{kkka}) is due to Lemma \ref{gg} in the following. 
\end{proof}

\begin{Lemma}\label{gg} Let $\phi \in L^\infty(0,\sigma)$ be subject to the inequality
\bbe\label{k1} \phi(t) \le a_1 +a_2 \int^t_0 ((t-s)^{\delta-1}+(t-s)^{\rho-1})\phi(s) ds, \,\, 0\le t\le \sigma
\bee
for  positive constants $a_1$, $a_2$, $\sigma$, $\rho$ and $\delta$ with     $\delta \ge \rho$. Then, we have
\bbe \label{ka} \phi(t) &\le &a_1\sum_{n=0}^{\infty}  \frac{ (a_2\Gamma(\rho) (\sigma^\delta+\sigma^\rho) )^n}{\Gamma ( n\rho +1)},\,\, 0\le t\le \sigma.
\bee
\end{Lemma}
This Gronwall inequality is implied from  \cite[Lemma 7.1.1, page 188]{Henry}. For reader's convenience, we sketch its derivation.
\begin{proof}
It follows from (\ref{k1}) that 
\bbe \phi(t) \le a_1 +K\phi(t) \,\,\,\mbox{ for } K\phi(t) \doteq a_2 (\sigma^{\delta-\rho}+1)\int^t_0 (t-s)^{\rho-1})\phi(s) ds.
\bee
Hence for any  integer $m>0$, iterating  repeatedly, we have
\begin{align*} \phi(t) \le &\sum_{n=0}^{m} K^n a_1(t) +K^{m+1}\phi(t)
\\
\le &a_1\sum_{n=0}^{m}  \frac{ (a_2\Gamma(\rho) (\sigma^{\delta-\rho}+1) )^n t^{\rho n}}{\Gamma ( n\rho +1)}+a_1  \frac{ (a_2\Gamma(\rho) (\sigma^{\delta-\rho}+1) )^{m+1} t^{\rho (m+1)}}{\Gamma ( (m+1)\rho +1)}\|\phi\|_{L^\infty(0,\sigma)}
\end{align*}
This gives (\ref{ka}) after taking $m\to \infty$. 
\end{proof}

\begin{Proposition}\label{TT1}
For $\frac34<\kappa  \le 1$, $0<\alpha< 2^{-\kappa}$ and  $U_0=(\a,b)\in L^2_\sigma(\Omega)^3\times L^2(\Omega)$, then (\ref{eq1}) admits a global weak solution.
\end{Proposition}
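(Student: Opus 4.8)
The plan is to let $n\to\infty$ in the approximate solutions $U_n=(\u_n,\theta_n)$ of (\ref{xx2}) constructed in the previous proposition and to show that a subsequential limit is a global weak solution in the sense of the Definition. Fix $\tau>0$. The starting point is the energy identity (\ref{xx2x}), whose integrated form (\ref{new112}) is \emph{uniform in} $n$ precisely because the trilinear term $\langle J_n\u_n\cdot\nabla U_n,U_n\rangle$ vanishes by the divergence-free property of $J_n\u_n$ (recall $J_n=n(n+\A)^{-1}$ maps into $D(\A)$). Hence $U_n$ is bounded in $L^\infty(0,\tau;L^2_\sigma(\Omega)^3\times L^2(\Omega))$, while $\u_n$ is bounded in $L^2(0,\tau;H^{\kappa/2})$ and $\theta_n$ in $L^2(0,\tau;W^{1,2}_0(\Omega))$, all independently of $n$. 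By Lemma \ref{L02} the first two bounds interpolate to a space-time bound on $\u_n$ in $L^2(0,\tau;L^p(\Omega)^3)$ with $\tfrac1p=\tfrac12-\tfrac\kappa3$; combined with the $L^\infty_tL^2_x$ bound this is what will control the quadratic terms.

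Next I would extract, along a subsequence not relabelled, weak-$*$ and weak limits $U_n\rightharpoonup U=(\u,\theta)$ in $L^\infty(0,\tau;L^2)$, $\u_n\rightharpoonup\u$ in $L^2(0,\tau;H^{\kappa/2})$ and $\theta_n\rightharpoonup\theta$ in $L^2(0,\tau;W^{1,2}_0)$, so that $U$ already lies in the function space required by the Definition. Weak convergence alone cannot handle the convective integrals $\langle\u_n\otimes\u_n,\nabla v\rangle$ and $\langle\u_n\theta_n,\nabla\vartheta\rangle$, so the decisive step is to upgrade to \emph{strong} convergence on the spatial support of the test functions. To prepare for this I would derive a uniform bound on $\p_tU_n$ in a negative-order space: writing the nonlinearity in divergence form $J_n\u_n\cdot\nabla U_n=\nabla\cdot(J_n\u_n\otimes U_n)$ and testing (\ref{xx2}) against smooth compactly supported solenoidal fields, the linear part $\L U_n$ is controlled by the $L^2(0,\tau;H^{\kappa/2}\times W^{1,2}_0)$ bound and the quadratic part by the $L^2_tL^p_x$ bound, yielding $\p_tU_n$ bounded in $L^{4/3}(0,\tau;Y^*)$ for a suitable space $Y$ containing $C^\infty_{0,\sigma}(\Omega)^3\times C^\infty_0(\Omega)$ densely.

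The principal obstacle is that $\Omega$ is unbounded, so the Aubin--Lions--Simon compactness cannot be applied globally. I would therefore localize: for each integer $R>1$ set $\Omega_R=\Omega\cap\{|x|<R\}$ and apply Aubin--Lions--Simon on $\Omega_R$, using the interior $H^{\kappa/2}$ (respectively $W^{1,2}$) regularity together with the uniform time-derivative bound to obtain $U_n\to U$ strongly in $L^2(0,\tau;L^2(\Omega_R)^4)$. A diagonal argument over $R=2,3,\dots$ then produces strong convergence in $L^2(0,\tau;L^2_{\mathrm{loc}}(\Omega))$, hence convergence almost everywhere after a further extraction. The delicate point here is the nonlocal character of the Helmholtz projection $P$ and of the fractional operator $\A^\kappa$, which prevents a naive cut-off argument; however, since all the relevant quantities are measured in $L^2$ and the localized compactness is read off directly from the uniform energy and time-derivative bounds rather than from a differential identity, this is a technical rather than a conceptual difficulty.

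Finally, with strong $L^2_{\mathrm{loc}}$ convergence in hand I would pass to the limit in the weak formulation (\ref{w1})--(\ref{w2}). Because every admissible test pair $(v,\vartheta)$ has compact spatial support, the convective integrals reduce to integrals over a fixed $\Omega_R$, where $\u_n\otimes\u_n\to\u\otimes\u$ and $\u_n\theta_n\to\u\theta$ converge in $L^1((0,\tau)\times\Omega_R)$ by the strong local convergence combined with the uniform $L^\infty_tL^2_x\cap L^2_tL^p_x$ bounds (a Vitali equi-integrability argument). The mollifier is removed using $J_n\u_n\to\u$, which follows from $J_n\to I$ strongly on $L^2$ together with $\u_n\to\u$. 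The linear terms pass to the limit by the weak convergence, and the initial data are recovered from $I_nU_0\to U_0$. This shows that $U=(\u,\theta)$ satisfies (\ref{w1})--(\ref{w2}) for every $\tau>0$; lower semicontinuity of the $L^2$ norms in (\ref{new112}) then transfers the energy inequality to $U$, which completes the construction of the global weak solution.
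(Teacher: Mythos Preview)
Your proposal is correct and follows essentially the same approach as the paper: uniform energy bounds from (\ref{new112}), a uniform estimate on $\p_tU_n$ in a negative-order space (the paper obtains $L^{4\kappa/3}(0,\tau;W^{-1,2})$ rather than your $L^{4/3}$, but either suffices), then Aubin--Lions--Simon type compactness to get strong convergence in $L^2(0,\tau;L^2_{\mathrm{loc}})$, and passage to the limit in (\ref{w1})--(\ref{w2}) exploiting the compact spatial support of the test functions. The paper simply cites Temam's compactness theorem in place of your explicit localization-plus-diagonal argument, but the content is identical.
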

\begin{proof}
To obtain the  weak solution existence, we need to study the compactness of the sequence $\{U_n\}_{n\ge 1}$. For
$ V=(v,\vartheta )\in W^{1,2}_{0,\sigma}(\Omega)^3\times W^{1,2}_0(\Omega),$
 we see that
\be \langle \p_tU_n, V\rangle &=&-\langle \L U_n, V\rangle  -\langle J_n\u_n\cdot \nabla U_n,V\rangle
\\ &=& -\langle  A^{\kappa -\frac12}\u_n, A^\frac12 v\rangle -\langle \nabla \theta_n, \nabla \vartheta \rangle -\langle \alpha \theta_n\nabla \frac1{|x|},\v\rangle
\\
&&+\langle \alpha \frac{\u_n}{|x|},\nabla \vartheta \rangle + \langle J_n\u_n\otimes \u_n,\nabla v\rangle + \langle J_n\u_n\theta_n,\nabla \vartheta \rangle
\\ &\le& \left(\| A^{\kappa -\frac12}\u_n\|_2+\|\nabla \theta_n\|_2+
\|J_n\u_n\|_4\|U_n\|_4\right)\|\nabla V\|_2
\\
&&+ \alpha \|\frac{\theta_n}{x} \|_2 \|\frac{v}{x}\|_2 + \alpha \|\frac{\u_n}{x}\|_2\|\nabla \vartheta \|_2
\\ &\le& \left(\| A^{\kappa -\frac12}\u_n\|_2+4\alpha \|U_n\|_2+\|\nabla \theta_n\|_2+
\|J_n\u_n\|_4\|U_n\|_4\right)\|\nabla V\|_2,
\ee
where we have used integration by parts, H\"older inequality and Lemma \ref{L01}.

Hence, by the derivation of (\ref{new2}), H\"older inequality,  Sobolev imbedding and Lemma \ref{L02}, we have
\be \lefteqn{\|\p_t U_n\|_{W^{-1,2}}}\\
&\le& \| \u_n\|_2^{\frac1\kappa -1}\|A^{\frac\kappa 2}\u_n\|_2^{2-\frac1\kappa }+\|\nabla \theta_n\|_2+4\alpha\|U_n\|_2
\\&&+\|J_n\u_n\|_2^{1-\frac3{4\kappa }}\|J_n\u_n\|_{1/(\frac12-\frac\kappa 3)}^{\frac3{4\kappa }}(\|\u_n\|_2^{1-\frac3{4\kappa }}\|\u_n\|_{1/(\frac12-\frac\kappa 3)}^{\frac3{4\kappa }}+\|\theta_n\|_2^\frac14\|\theta_n\|_6^\frac34)
\\
&\le& \| \u_n\|_2^{\frac1\kappa -1}\|A^{\frac\kappa 2}\u_n\|_2^{2-\frac1\kappa }+\|\nabla \theta_n\|_2+4\alpha\|U_n\|_2
\\&&+
3\|J_n\u_n\|_2^{1-\frac3{4\kappa }}\|A^{\frac\kappa 2}J_n\u_n\|_{2}^{\frac3{4\kappa }}(\|\u_n\|_2^{1-\frac3{4\kappa }}\|A^\frac\kappa 2\u_n\|_{2}^{\frac3{4\kappa }}+\|\theta_n\|_2^\frac14\|\nabla \theta_n\|_2^\frac34).
\ee
This together with (\ref{JkIk}) and (\ref{new112}) implies that
\be \lefteqn{\|\p_t U_n\|_{W^{-1,2}}}\\
&\le&  \| \u_n\|_2^{\frac1\kappa -1}\|A^{\frac\kappa 2}\u_n\|_2^{2-\frac1\kappa }+\|\nabla \theta_n\|_2+4\alpha \|U_n\|_2
\\
&&+
3\|\u_n\|_2^{1-\frac3{4\kappa }}\|A^{\frac\kappa 2}\u_n\|_{2}^{\frac3{4\kappa }}(\|\u_n\|_2^{1-\frac3{4\kappa }}\|A^\frac\kappa 2\u_n\|_{2}^{\frac3{4\kappa }}+\|\theta_n\|_2^\frac14\|\nabla \theta_n\|_2^\frac34)
\\
&\le&  \| U_0\|_2^{\frac1\kappa -1}\|A^{\frac\kappa 2}\u_n\|_2^{2-\frac1\kappa }+\|\nabla \theta_n\|_2+4\alpha \|U_0\|_2
\\
&&+
3\|U_0\|_2^{1-\frac3{4\kappa }}\|A^{\frac\kappa 2}\u_n\|_{2}^{\frac3{4\kappa }}(\|U_0\|_2^{1-\frac3{4\kappa }}\|A^\frac\kappa 2\u_n\|_{2}^{\frac3{4\kappa }}+\|U_0\|_2^\frac14\|\nabla \theta_n\|_2^\frac34)
\\&\le& C (\|A^{\frac\kappa 2}\u_n\|_2^{2-\frac1\kappa }+\|\nabla \theta_n\|_2+1+
\|A^\frac\kappa 2\u_n\|_{2}^{\frac3{2\kappa }}+\|A^{\frac\kappa 2}\u_n\|_{2}^{\frac3{4\kappa }}\|\nabla \theta_n\|_2^\frac34).\ee
Thus consulting with (\ref{new112}), we obtain  that
\bbe \p_t U_n\in L^{\frac{4\kappa }3}(0,\tau; W^{-1,2}(\Omega)^4) \,\,\mbox{ uniformly bounded }\label{abc1}
\bee
with respect to  all $n>0$ and
 for any fixed  $\tau>0$. Moreover, equation  (\ref{new112}) also gives rise to uniform boundedness of  the sequence  $\{U_n\}_{n\ge 1}$   in the space
\bbe L^\infty(0,\tau; L^2_\sigma(\Omega)^3\times L^2(\Omega))\cap L^2 (0,\tau; D(A^{\frac\kappa 2})\times W_0^{1,2}(\Omega)).\label{space}
\bee
Therefore,
 $U_n$ admits a subsequence, denoted again by $U_n$, converging to an element   $U$ in the space given by (\ref{space}) in the following sense:
\bbe \label{abc2}
&&U_n  \rightarrow U \,\,\mbox{weak-star  in }\,\, L^\infty(0,\tau; L^2_\sigma(\Omega)^2\times L^2(\Omega)),
\\ \label{abc3}
&& U_n  \rightarrow U \,\,\mbox{ weakly in }\,\, L^2 (0,\tau; D(A^{\frac\kappa 2})\times W_0^{1,2}(\Omega)).
\bee
 Furthermore, by the compactness result \cite[Theorem 2.1 on page 271 \& Remark 3.2 on page 290]{T}), we see that
\be  U_n  \rightarrow U \,\,\mbox{ strongly in }\,\, L^2 (0,\tau; L^2(K)^4) \mbox{ for any compact set $K \subset \Omega$}.
\ee
due to (\ref{abc1}), (\ref{abc2}) and (\ref{abc3}).
This convergence implies immediately that   $U$ is a desired weak solution  solving (\ref{w1}) and (\ref{w2}).

The proof of Proposition \ref{TT1} is complete.
\end{proof}

\section{Proof of $L^2$-decay property (\ref{ass1})}

From the convergence of $U_n\to U$ in the sense described in  the previous subsection, we see that
\be \|U(t)\|_2 \le \liminf_{n\to \infty}\|U_n(t)\|_2
\ee
for $a.e.$ $t>0$. Thus it  suffices  to  show the corresponding $L^2$ decay estimates of $U_n$ uniformly with respect to $n$.  Therefore, for simplicity of notation, we omit the subscript $n$ for the approximate solutions so that $U=(\u,\theta)$ represents for $U_n=(\u_n,\theta_n)$. Thus  approximate  equations
(\ref{xx2}) and (\ref{int}) are  rewritten, respectively,  as
\bbe
\p_t U + \L U = -(P (J_n\u\cdot \nabla u), J_nu\cdot\nabla \theta), \,\,\, U(0)=I_n U_0,\label{xxx1}
\bee
and
\bbe
\label{intt} U(t) = e^{-t\L} I_n U_0-\int^t_0 e^{-(t-s)\L}(P (J_n\u\cdot \nabla u), J_nu\cdot\nabla \theta) ds.
\bee
According to the proof of (\ref{new112}),   energy inequality (\ref{new112}) can be rephrased as
\bbe \|U(t)\|_2^2 + (1-\alpha  2^\kappa  )\int^t_s(\|A^{\frac\kappa 2}\u(r)\|_2^2+\|\nabla \theta(r)\|_2^2)dr\le \|U(s)\|_2^2\label{energy}\bee
for  $0\le s <t$.

Let us begin with the estimate of the nonlinear integrand of (\ref{intt}) in the $L^2$ norm.
Applying the divergence free property   and  integrating by parts, we take a test function $V\in L^2_\sigma (\Omega)^3\times L^2(\Omega)$ to obtain
\bbe \lefteqn{\langle e^{-t\L }(P (J_n\u\cdot \nabla u), J_nu\cdot\nabla \theta), V\rangle}\nonumber\\
 &=& -\langle J_n\u\otimes \u,\nabla P_1 e^{-t\L } V\rangle  - \langle (J_n\u)\theta,\nabla P_2 e^{-t\L } V\rangle. \label{hh1}
 \bee
 By (\ref{D}), Lemma \ref{L07} and H\"older estimate, the previous equation is bounded by
 \bbe
\lefteqn{ \| J_n\u\|_4 \| \u\|_4 \|\nabla P_1e^{-t\L } V\|_2+ \|J_n\u\|_3\|\theta\|_6 \|\nabla P_2e^{-t\L } V\|_2}\nonumber\\\nonumber
\\
&\le & C(t^{-\frac12}+t^{-\frac1{2\kappa }})\| J_n\u\|_2^{1-\frac3{4\kappa }} \| J_n\u\|_{\frac1{\frac12-\frac\kappa 3}}^{\frac3{4\kappa }}\| \u\|_2^{1-\frac3{4\kappa }} \| \u\|_{\frac1{\frac12-\frac\kappa 3}}^{\frac3{4\kappa }} \|V\|_2\nonumber
\\
&&+C t^{-\frac12}\|J_n\u\|_2^{1-\frac1{2\kappa }}\|J_n\u\|_{\frac1{\frac12-\frac\kappa 3}}^{\frac1{2\kappa }}\|\theta\|_6\| V\|_2. \label{hh2}
\bee
With the use of  (\ref{JkIk}),  Sobolev imbedding and Lemma \ref{L02} for $\frac1p = \frac12-\frac \kappa 3$, the combination of (\ref{hh1})-(\ref{hh2}) becomes
\bbe \|e^{-t\L }(P J_n\u\cdot \nabla u, J_nu\cdot \nabla \theta)\|_2  &\le & C(t^{-\frac12}+t^{-\frac1{2\kappa }})\| \u\|_2^{2-\frac3{2\kappa }} \| A^{\frac\kappa 2}\u\|_2^{\frac3{2\kappa }}\nonumber
\\
&&+ Ct^{-\frac12}\|\u\|_2^{1-\frac1{2\kappa }}\|A^{\frac\kappa 2}\u\|_{2}^{\frac1{2\kappa }}\|\nabla \theta\|_2.\label{oo1}
\bee
Applying the previous estimate into   (\ref{intt}), we have
\bbe
 \|U(t)\|_2&\le &  \| I_n e^{-t\L}U_0\|_{2}\nonumber
+C\int^t_0 (t\!-\!s)^{-\frac12} \|\u(s)\|_2^{1-\frac1{2\kappa }}\|A^{\frac\kappa 2}\u(s)\|_{2}^{\frac1{2\kappa }} \| \nabla \theta(s)\|_{2} ds\nonumber
\\ &&+C\int^t_0 ((t\!-\!s)^{-\frac12} +(t-s)^{-\frac1{2\kappa }})\| \u(s)\|_{2}^{2-\frac3{2\kappa }} \| A^{\frac\kappa 2}\u(s)\|_{2}^{\frac3{2\kappa }}ds.\nonumber
\bee
Integrating the previous equation and using   (\ref{JkIk}),  we have
\be
 \lefteqn{\int^t_0\|U(s)\|_2ds}\\
 &\le &  \int^t_0\|  e^{-s\L}U_0\|_{2}ds\nonumber
+C\int^t_0\int^r_0\!\!\!(r\!-\!s)^{-\frac12}\|\u(s)\|_2^{1-\frac1{2\kappa }}\|A^{\frac\kappa 2}\u(s)\|_{2}^{\frac1{2\kappa }} \| \nabla \theta(s)\|_{2} dsdr
\\ &&+C\int^t_0\int^r_0 ((r\!-\!s)^{-\frac12} +(r-s)^{-\frac1{2\kappa }} )\| \u(s)\|_{2}^{2-\frac3{2\kappa }} \| A^{\frac\kappa 2}\u(s)\|_{2}^{\frac3{2\kappa }}dsdr.\nonumber
\ee
Changing the integration order and integrating with respect to $r$, we have
\bbe\nonumber
 \lefteqn{\int^t_0\|U(s)\|_2ds}\\&\le &  \int^t_0\|  e^{-s\L}U_0\|_{2}ds\nonumber+C\int^t_0\int^t_s\!\!\!(r\!-\!s)^{-\frac12}\|\u(s)\|_2^{1-\frac1{2\kappa }}\|A^{\frac\kappa 2}\u(s)\|_{2}^{\frac1{2\kappa }} \| \nabla \theta(s)\|_{2} drds
\\ \nonumber &&+C\int^t_0\int^t_s ((r\!-\!s)^{-\frac12}+(r-s)^{-\frac1{2\kappa }} ) \| \u(s)\|_{2}^{2-\frac3{2\kappa }} \| A^{\frac\kappa 2}\u(s)\|_{2}^{\frac3{2\kappa }}drds\nonumber
\\ \nonumber
&\le &  \int^t_0\|  e^{-s\L}U_0\|_{2}ds\nonumber
+Ct^\frac12\int^t_0\|\u(s)\|_2^{1-\frac1{2\kappa }}\|A^{\frac\kappa 2}\u(s)\|_{2}^{\frac1{2\kappa }} \| \nabla \theta(s)\|_{2} ds
\\ &&+C(t^\frac12+t^{1-\frac1{2\kappa }})\int^t_0 \| \u(s)\|_{2}^{2-\frac3{2\kappa }} \| A^{\frac\kappa 2}\u(s)\|_{2}^{\frac3{2\kappa }}\label{decay2}
ds.
\bee
Applying H\"older inequality and energy inequality (\ref{energy}), we have
\bbe
\lefteqn{\int^t_0\|U(s)\|_2ds- \int^t_0\|  e^{-s\L}U_0\|_{2}ds}\nonumber
\\
 &\le &Ct^{1-\frac1{4\kappa }}\left(\int^t_0\|\u(s)\|_2^{4\kappa (1-\frac1{2\kappa })}\|A^{\frac\kappa 2}\u(s)\|_2^{2 } ds\right)^{\frac1{4\kappa }}\left(\int^t_0\|\nabla \theta(s)\|_2^{2 } ds\right)^{\frac1{2}}\hspace{5mm}\nonumber 
\\
 & &+C( t^{2-\frac5{4\kappa }}+t^{\frac32-\frac3{4\kappa }})\left(\int^t_0\| \u(s)\|_{2}^{\frac{8\kappa }3-2}\| A^{\frac\kappa 2}\u(s)\|_2^{2} ds\right)^{\frac3{4\kappa }}\nonumber
 \\
  & \le &C( t^{1-\frac1{4\kappa }}+t^{2-\frac5{4\kappa }}+t^{\frac32-\frac3{4\kappa }})\|U_0\|_2^2.\label{decay}
\bee
It follows from (\ref{energy}) that
\bbe \|U(t)\|_2 \le \frac1t\int^t_0 \|U(s)\|_2ds \label{decay5}\bee
Therefore, equation  (\ref{decay}) becomes
\bbe
 \|U(t)\|_2 & \le &\frac1t\int^t_0\|  e^{-s\L}U_0\|_{2}ds+( t^{-\frac1{4\kappa }}+t^{-(\frac5{4\kappa }-1)}+t^{-(\frac3{4\kappa }-\frac12)})\|U_0\|_2^2,\label{decay1}
\bee
which gives the desired  $L^2$ decay (\ref{ass1}) due to (\ref{X2}).

\section{Proof of $L^2$ algebraic decay property (\ref{ass2})}

Observing
\be
\frac5{4\kappa }-1\ge \frac1{4\kappa }, \,\,\,  \frac3{4\kappa }-\frac12\ge \frac1{4\kappa }, \,\mbox{ since } \kappa \le 1,
\ee
and  the assumption (\ref{con1}) or
\bbe \|e^{-t\L}U_0\|_2 \le C t^{-\beta},\label{decay6}\bee
we obtain from  (\ref{decay1}) that
\bbe \|U(t)\|_2 \le C t^{-\beta} + C t^{-\frac1{4\kappa }}.
\bee
Here, we only consider the case $t\ge 1$,  due to the uniform bound  \be\|U(t)\|_2\le \|U_0\|_2\ee given by  (\ref{energy}). Hence, we obtain (\ref{ass2}) when $\beta \le \frac1{4\kappa }$.

For $\beta >\frac1{4\kappa }$,
since $1\ge \kappa  >\frac34$, we set $p>1$ and $q>1$ so that
\bbe \label{dd}\frac1p+ \frac3{4\kappa }=1 \,\, \mbox{ and } \frac1q+ \frac12+\frac1{4\kappa }=1.
\bee
This allows us to employ (\ref{decay6}) and  H\"older inequality in   (\ref{decay2}) to obtain
\bbe\nonumber
\lefteqn{\|U(t)\|_2}\\
 &\le& Ct^{-\beta} +C( t^{-\frac12}+t^{-\frac1{2\kappa }})\nonumber
 \left(\int^t_0\| \u\|_{2}^{(2-\frac3{2\kappa })p}ds\right)^{\frac1p}\left(\int^t_0\| A^{\frac\kappa 2}\u\|_2^{2}ds \right)^{\frac3{4\kappa }}
 \\
  & & +Ct^{-\frac1{2}}\left(\int^t_0\|\u\|_2^{(1-\frac1{2\kappa })q}\right)^{\frac1q}\left(\int^t_0\|A^{\frac\kappa 2}\u\|_2^2 ds\right)^{\frac1{4\kappa }}\left(\int^t_0 \| \nabla \theta\|_{2}^2  ds\right)^{\frac12}\nonumber 
   \\ &\le& Ct^{-\beta} + Ct^{-\frac12}
 \left(\int^t_0\| \u\|_{2}^{(2-\frac3{2\kappa })p}ds\right)^{\frac1p}
  +Ct^{-\frac1{2}}\left(\int^t_0\|\u\|_2^{(1-\frac1{2\kappa })q}\right)^{\frac1q},\label{decay7}
 \bee
where we have used  the uniform bound in (\ref{energy}).

 The desired decay estimate will be derived by a  boot strap iteration scheme. To do so, we begin with the step   $\beta_1=\frac1{4\kappa }$ so that
 \bbe \|U(t)\|_2\le C t^{-\beta_1}.\bee
  For an  integer $n\ge 1$, assuming the existence of a number  $0<\beta_n <\beta\le \frac12$    so that
\bbe \|U(t)\|_2 \le C t^{-\beta_n},\label{betan}\bee
we will show  that the previous estimate can be advanced  to the one involving  a  power    $-\beta_{n+1}$ instead of $-\beta_n$. Employing  (\ref{energy}) and (\ref{dd}), we estimate (\ref{decay7}) by using (\ref{betan}) to obtain  that
\bbe\nonumber
\|U(t)\|_2
 &\le& C [t^{-\beta} +t^{-\frac12+\frac1p -\beta_n(2-\frac3{2\kappa })}\nonumber
  +t^{-\frac1{2}+\frac1q-\beta_n(1-\frac1{2\kappa })}]
  \\
  &=& C[t^{-\beta} +t^{\frac12-\frac3{4\kappa } -\beta_n(2-\frac3{2\kappa })}\nonumber
  +t^{-\frac1{4\kappa }-\beta_n(1-\frac1{2\kappa })}]
  \\
  &=& C[t^{-\beta} \nonumber
  +t^{-\frac1{4\kappa }-\beta_n(1-\frac1{2\kappa })}(t^{\frac12-\frac1{2\kappa } -\beta_n(1-\frac1{\kappa })}+1)]
  \\
  &=& C[t^{-\beta} \nonumber
  +t^{-\frac1{4\kappa }-\beta_n(1-\frac1{2\kappa })}(t^{-(\frac1{\kappa }-1)(\frac12 -\beta_n)}+1)]
  \\
  &\le & C(t^{-\beta}
  +t^{-\frac1{4\kappa }-\beta_n(1-\frac1{2\kappa })})\label{a77}
 \bee
 due to $\beta_n \le \frac12$ and $t\ge 1$.
Here we have used the inequalities
\bbe \beta_n(2-\frac3{2\kappa })p< \frac{\frac12(2-\frac3{2\kappa })}{1-\frac3{4\kappa}}=1,
\bee
and
\bbe \beta_n(1-\frac1{2\kappa })q< \frac{\frac12(2-\frac3{2\kappa })}{\frac12-\frac1{4\kappa}}\le 1.
\bee
 If $\frac1{4\kappa }+\beta_n(1-\frac1{2\kappa })\ge \beta$, equation  (\ref{a77}) implies the desired estimate (\ref{ass2}). Otherwise,  we set
 \bbe \beta_{n+1} = \frac1{4\kappa }+\beta_n(1-\frac1{2\kappa }),\,\,\, n\ge 1,
\bee
and hence
\be \beta_{n+1}= \frac1{4\kappa }\sum_{m=0}^{n} (1-\frac1{2\kappa })^m.
\ee
Since the upper limit of $\beta_{n+1}$ is
\bbe \frac1{4\kappa }\sum_{m=0}^\infty (1-\frac1{2\kappa })^m= \frac{\frac1{4\kappa }}{ 1-(1-\frac1{2\kappa })}=\frac12,
\bee
there is an integer $n_0\ge 1$ so that $\frac12 >\beta_{n_0+1} \ge \beta>\beta_{n_0}$  and
\bbe \|U(t)\|_2\le C(t^{-\beta} + t^{-\beta_{n_0+1}} )\le C t^{-\beta}.\bee
This gives (\ref{ass2}) and completes the proof of Theorem \ref{Th1}.

\

\noindent {\bf Acknowledgment.}
This research was supported by The Shenzhen Natural Science Fund of China (the Stable Support Plan Program No. 20220805175116001).

\end{document}